\title{On Approximations of Subordinators in $L^p$ and the Simulation of Tempered Stable Distributions}
\author{Michael Grabchak\footnote{Email address: mgrabcha@charlotte.edu}\ \ and Sina Saba\footnote{Email address: ssaba3@charlotte.edu}\\
{\it University of North Carolina Charlotte}}
\begin{document}
\newtheorem{prop}{Proposition}
\newtheorem{thrm}{Theorem}
\newtheorem{defn}{Definition}
\newtheorem{cor}{Corollary}
\newtheorem{lemma}{Lemma}
\newtheorem{remark}{Remark}
\newtheorem{example}{Example}

\newcommand{\rd}{\mathrm d}
\newcommand{\rE}{\mathrm E}
\newcommand{\TS}{\mathrm{TS}_\alpha}
\newcommand{\tr}{\mathrm{tr}}
\newcommand{\iid}{\stackrel{\mathrm{iid}}{\sim}}
\newcommand{\eqd}{\stackrel{d}{=}}
\newcommand{\approxd}{\stackrel{d}{\approx}}
\newcommand{\cond}{\stackrel{d}{\rightarrow}}
\newcommand{\conv}{\stackrel{v}{\rightarrow}}
\newcommand{\conw}{\stackrel{w}{\rightarrow}}
\newcommand{\conp}{\stackrel{p}{\rightarrow}}
\newcommand{\confdd}{\stackrel{fdd}{\rightarrow}}
\newcommand{\plim}{\mathop{\mathrm{p\mbox{-}lim}}}
\newcommand{\lgg}{\mathrm{log}}
\newcommand{\dlim}{\operatorname*{d-lim}}
\newcommand{\ID}{\mathrm{ID}}

\newcommand{\esssup}{\operatorname*{ess\,sup}}

\newcommand{\R}{\mathbb{R}}
\newcommand{\N}{\mathbb{N}}
\newcommand{\C}{\mathbb{C}}
\newcommand{\Q}{\mathbb{Q}}
\newcommand{\PB}{\mathbb{P}}
\renewcommand{\angle}[2]{\langle #1,#2 \rangle}
\newcommand{\norm}[1]{\left\| #1 \right\|}
\newcommand{\DTS}{\mathrm{DTS}}
\newcommand{\PTS}{\mathrm{PTS}}
\newcommand{\Pois}{\mathrm{Pois}}
\newcommand{\LL}{\mathrm{LL}}
\newcommand{\Ga}{\mathrm{Ga}}
\newcommand{\GGa}{\mathrm{GGa}}
\newcommand{\U}{\mathrm{U}}
\newcommand{\B}{\mathrm{B}}
\newcommand{\Levy}{L\' evy }

\maketitle

\begin{abstract}
Subordinators are infinitely divisible distributions on the positive half-line.  They are often used as mixing distributions in Poisson mixtures. We show that appropriately scaled Poisson mixtures can approximate the mixing subordinator and we derive a rate of convergence in $L^p$ for each $p\in[1,\infty]$. This includes the Kolmogorov and Wasserstein metrics as special cases.  As an application, we develop an approach for approximate simulation of the underlying subordinator. In the interest of generality, we present our results in the context of more general mixtures, specifically those that can be represented as differences of randomly stopped L\'evy processes. Particular focus is given to the case where the subordinator belongs to the class of tempered stable distributions.\\

\noindent\textbf{Keywords:} subordinators, tempered stable distributions, simulation, Poisson mixtures, rates of convergence, probability metrics
\end{abstract}

\section{Introduction}

Subordinators are a wide class of positive random variables. They correspond to the class of infinitely divisible distributions on the positive half-line. See \cite{Sato:1999} or \cite{Steutel:vanHarn:2004} for many properties and examples. Subordinators have many uses and applications. They are often used as mixing distributions in the context of Poisson mixtures, which are common models for over-dispersed count data. We show that appropriately scaled Poisson mixtures can approximate the mixing subordinator and we derive a rate of convergence in $L^p$ for each $p\in[1,\infty]$. When $p=\infty$ this is the Kolmogorov metric and when $p=1$ it is the Kantorovich or  Wasserstein metric. As an application, we develop a methodology for approximate simulation of the underlying subordinator.
In the interest of generality, we present our theoretical results in the context of more general mixtures. Specifically, those that can be represented as differences of randomly stopped L\'evy processes.

We are particularly interested by the case where the mixing distribution is tempered stable (TS). TS distributions form a large class of models, which modify the tails of infinite variance stable distributions to make them lighter. This leads to more realistic models for many application areas. We are motivated by applications to finance, where TS subordinators are common models for stochastic volatility (\cite{Barndorff-Nielsen:Shephard:2001}, \cite{Barndorff-Nielsen:Shephard:2003}) and their bilateral versions are often used to model financial returns (\cite{Kuchler:Tappe:2013}, \cite{Kuchler:Tappe:2014}). Tweedie distributions introduced in \cite{Tweedie:1984} are, perhaps, the earliest class of TS distributions to be studied. More general classes were introduced in \cite{Rosinski:2007} and \cite{Rosinski:Sinclair:2010}. See also the monograph \cite{Grabchak:2016} and the references therein.  Poisson mixtures, where the mixing distribution is TS are called discrete TS distributions, see \cite{Grabchak:2018} and \cite{Grabchak-DTS}. In the special case where the mixing distribution belongs to the Tweedie class, they are called Poisson-Tweedie. See \cite{Baccini:Barabesi:Stracqualursi:2016} for a recent overview.

Formally, a Poisson mixture can be described as follows. Let  $X$ be a subordinator and, independent of $X$, let $\{Z(t):t\ge0\}$ be a Poisson process with rate $1$. The Poisson mixture is then given by $Z(X)$. For a survey of Poisson mixtures see \cite{Karlis:Xekalaki:2005}  or Section VI.7 of \cite{Steutel:vanHarn:2004}. We consider a more general bilateral situation, where $X_+,X_-$ are independent subordinators and, independent of these, $Z^+=\{Z^+(t):t\ge0\}$ and $Z^-=\{Z^-(t):t\ge0\}$ are independent L\'evy processes. Later we will allow these processes to be quite general, but, for the moment, assume that they are independent Poisson processes each with rate $1$ and set
\begin{eqnarray}\label{eq: bilat poisson}
a Z^+(X^+/a)-  a Z^-(X^-/a).
\end{eqnarray}
This gives a random variable whose support is concentrated on the lattice $a\mathbb Z=\{0,\pm a,\pm2a,\pm3a,\dots\}$.  

Distributions of this type are important in the context of high frequency finance. In practice, trades on an exchange cannot be for arbitrary amounts. There is a smallest price increment called the `tick size,' and all prices changes must be in the form of an integer times the tick size. Thus, if $a$ is the tick size, then price changes must take values on the lattice $a\mathbb Z$. For this reason, distributions of the form \eqref{eq: bilat poisson} are often used to model high frequency price changes, see \cite{Barndorff-Nielsen:Pollard:Shephard:2012} or \cite{Koopman:Lit:Lucas:2017}. On the other hand, less frequent price changes are typically modeled using continuous distributions, often bilateral subordinators. The idea is that over larger time frames, the tick size does not matter (so long as it is relatively small). We explain the transition from discrete to continuous models, by showing that
\begin{eqnarray}\label{eq: main conv intro}
a Z^+(X^+/a)-  a Z^-(X^-/a) \cond X^+-X^- \mbox{ as } a\downarrow0.
\end{eqnarray}
Further, this result gives an approximate simulation method for simulating subordinators. The ability to simulate subordinators is particularly important as these distributions form the building blocks from which other distributions, including many multivariate distributions can be simulated, see \cite{Xia:Grabchak:2022}.

The purpose of this paper is two-folds. First, we prove that \eqref{eq: main conv intro} holds and we derive a rate of convergence in $L^p$ for each $p\in[1,\infty]$. In the interest of generality, we do not require $Z^+$ and $Z^-$ to be Poisson processes, but allow them to be more general L\'evy processes. Second, we use this convergence to justify an approximate simulation method. We show that one can approximately simulate $ X^+-X^-$ by simulating $a Z^+(X^+/a)-  a Z^-(X^-/a)$ for a relatively small $a$ and we derive approaches for simulating the latter. This is particularly important for TS distributions, where, with the exception of some relatively simple special cases (\cite{Hofert:2011}, \cite{Grabchak:2021}), no exact simulation methods are known.

The rest of this paper is organized as follows. In Section \ref{sec: background} we recall some basic facts about infinitely divisible distributions and subordinators and in Section \ref{sec: main results} we give our main theoretical results. In Section \ref{sec: sub Pois} we discuss Poisson mixtures more broadly and give our methodology for approximate simulation of the mixing distribution. In Section \ref{sec: sims} we give a small scale simulation study. Proofs are postponed to Section \ref{sec: proofs}. In Section \ref{sec: extensions} we consider extension of our results. These include an application to normal variance models, which are popular in financial applications.

Before proceeding we introduce some notation. We use the standard abbreviations: iid for independent and identically distributed, cdf for cumulative distribution function, pdf for probability density  function, and pmf for probability mass function. We write $\mathbb R$ to denote the set of real numbers and we write $\mathfrak B(\mathbb R)$ to denote the class of Borel sets on $\mathbb R$. We write $\mathbb C$ to denote the set of complex numbers and, for $z\in\mathbb C$, we write $\Re z$ and $\Im z$ to denote the real and imaginary parts of $z$, respectively. We write $\wedge$ and $\vee$ to denote the minimum and maximum, respectively.  We write $\mathrm{Pois}(\lambda)$, $U(a,b)$, $N(0,1)$, and $\delta_a$ to denote, respectively, the Poison distribution with mean $\lambda$, the uniform distribution on $(a,b)$, the standard normal distribution, and the point mass at $a$. Further, we write  $\Ga(\beta,\zeta)$ to denote a gamma distribution with pdf 
$$
f_{\beta,\zeta}(x)=\frac{\zeta^{\beta}}{\Gamma(\beta)}x^{\beta-1}e^{-\zeta x},\ x>0,
$$
where $\beta,\zeta>0$ are parameters. For a set $A$ we write $1_A$ to denote the indicator function on $A$ and we write $:=$ to denote a defining equality. If $\mu$ is a probability distribution, we write $X\sim \mu$ to denote that $X$ is a random variable with distribution $\mu$ and we write $X_1,X_2,\dots\iid\mu$ to denote that $X_1,X_2,\dots$ are iid random variables with common distribution $\mu$.

\section{Background}\label{sec: background}

In this paper we focus on infinitely divisible distributions with finite variation and no drift. A distribution $\mu$ of this type has a characteristic function of the form
$$
\hat\mu(s) =e^{C_\mu(s)}, \ \ s\in\mathbb R,
$$
where
$$
C_\mu(s)=\int_{-\infty}^\infty \left(e^{ixs}-1\right) M(\rd x), \ \ s\in\mathbb R
$$
is the cumulant generating function and $M$ is the L\'evy measure satisfying
\begin{eqnarray}\label{eq: levy meas}
M(\{0\}) = 0 \mbox{ and } \int_{-\infty}^\infty \left(|x|\wedge1\right)M(\rd x)<\infty.
\end{eqnarray}
We write $\mu=\ID_0(M)$ to denote this distribution. For future reference, we note that
\begin{eqnarray}\label{eq: real part c is neg}
\Re C_\mu(s)=\int_{-\infty}^\infty \left(\cos(xs)-1\right) M(\rd x)\le0, \ \ s\in\mathbb R.
\end{eqnarray}
Associated with every infinitely divisible distribution $\mu$ is a L\'evy process $\{Z(t):t\ge0\}$, where $Z(1)\sim\mu$. This process has independent and stationary increments. An important example is when $M(\rd x) = \lambda \delta_1(\rd x)$. In this case, $\ID_0(M)$ is a Poisson distribution with mean $\lambda$ and the associated L\'evy process is a Poisson process with rate $\lambda$. See \cite{Sato:1999} for many results about infinitely divisible distributions and their associated L\'evy processes.

When $M((-\infty,0])=0$, the distribution $\mu=\ID_0(M)$ is called a subordinator. By a slight abuse of terminology, we also use this term to describe any random variable from this distribution. In this case, $\mu((-\infty,0))=0$ and the associated L\'evy process is increasing. When $M((0,\infty))=0$, the distribution $\mu=\ID_0(M)$ satisfies $\mu((0,\infty))=0$ and the associated L\'evy process is decreasing. In general, under our assumptions, we can decompose the L\'evy measure $M$ into two parts: 
\begin{eqnarray}\label{eq: M +}
M^+(B) = M(B\cap(0,\infty)),\ \ B\in \mathfrak B(\mathbb R)
\end{eqnarray}
and 
\begin{eqnarray}\label{eq: M -}
M^-(B) = M((-B)\cap(-\infty,0)), \ \ B\in \mathfrak B(\mathbb R).
\end{eqnarray}
In this case, if $X^-\sim\ID_0(M^-)$ and $X^+\sim\ID_0(M^+)$ are independent and $X=X^+-X^-$, then $X\sim\ID_0(M)$ and $X^-,X^+$ are subordinators. Since $X$ is the difference of two independent subordinators, we sometimes call it (or equivalently its distribution) a bilateral subordinator.

TS distributions form an important class of  infinitely divisible distributions. They correspond to the case, where the L\'evy measure is of the form
\begin{eqnarray}\label{eq: TS Levy meas}
M(\rd x) = \eta_- g(x) x^{-1-\alpha} 1_{[x<0]}\rd x+\eta_+ g(x) x^{-1-\alpha} 1_{[x>0]}\rd x,
\end{eqnarray}
where $\eta_{\pm}\ge0$ with $\eta_++\eta_->0$, $\alpha\in(0,1)$, and $g:\mathbb R\mapsto[0,\infty)$ is a bounded Borel function satisfying
$$
\lim_{x\to0}g(x) = 1.
$$
We call $g$ the tempering function and we denote the distribution $\mu=\ID_0(M)$ by $\mu=\TS(g,\eta_-,\eta_+)$. We note that one can consider the more general case where $\alpha\in(0,2)$. However, when $\alpha\in[1,2)$, the corresponding distributions do not have finite variation and are, thus, beyond the scope of this paper. 

When $g\equiv1$, the distribution $\TS(g,\eta_-,\eta_+)$ reduces to an infinite variance stable distribution, see \cite{Samorodnitsky:Taqqu:1994}. It is often assumed that $\lim_{x\to\pm\infty}g(x)=0$, which leads to distributions that are similar to stable distributions in some central region, but with lighter (i.e.,\ tempered) tails. This explains the name and makes these distributions useful for many application areas, see  \cite{Grabchak:Samorodnitsky:2010} for a discussion in the context of finance. Many facts about TS distributions can be found in \cite{Rosinski:2007}, \cite{Rosinski:Sinclair:2010}, \cite{Grabchak:2016}, and the references therein.

\section{Main Results}\label{sec: main results}

Our main theoretical results are concerned with randomly stopped L\'evy processes.  For this reason, we generally work with two infinitely divisible distributions: $\mu=\ID_0(M)$ and $\nu=\ID_0(L)$. For us, $\mu$ is the distribution of the random time when the process is stopped and $\nu$ is the distribution of the L\'evy process. Throughout, we take
$$
X\sim \mu=\ID_0(M)\mbox{ and } Z\sim \nu=\ID_0(L)
$$
to denote generic random variables  from distributions $\mu$ and $\nu$ respectively. At this point, we make no additional assumptions on $\mu$, but for distribution $\nu$, we assume, throughout, that
$$
\zeta_1:=\int_{-\infty}^\infty |z|L(\rd z)<\infty\mbox{ and } \gamma:=\int_{-\infty}^\infty z L(\rd z)>0.
$$
It can be shown that $\gamma=\rE[Z]$. 

We decompose $M$  into $M^+$ and $M^-$ as in \eqref{eq: M +} and \eqref{eq: M -}. Let $X^-\sim\ID_0(M^-)$ and $X^+\sim\ID_0(M^+)$ be independent random variables and note that $X\eqd X^+-X^-$. Independent of these, let $Z^+=\{Z^+(t):t\ge0\}$ and $Z^-=\{Z^-(t):t\ge0\}$ be independent L\'evy processes with $Z^+(1),Z^-(1)\sim\nu$.  For $a>0$ set 
\begin{eqnarray*}
Y_a = a Z^+(X^+/(a\gamma))-  a Z^-(X^-/(a\gamma))
\end{eqnarray*}
and let $\mu_a$ be the distribution of $Y_a$. A simple conditioning argument shows that the characteristic function of $Y_a$ is given by
\begin{eqnarray}\label{eq: mu a}
\hat\mu_a(s) = \rE\left[e^{isY_a}\right]= \exp\left\{\int_{-\infty}^\infty \left(e^{\frac{|x|}{a\gamma} C_\nu\left(sa\frac{x}{|x|}\right)}-1\right) M(\rd x)\right\}, \ \ \ s\in\mathbb R,
\end{eqnarray}
where $C_\nu$ is the cumulant generating function of $\nu$.

\begin{prop}\label{prop: conv}
We have
$$
Y_a\cond X \ \mbox{ as }\ a\downarrow0.
$$
\end{prop}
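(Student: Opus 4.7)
My plan is to prove the convergence via Lévy's continuity theorem by showing that $\hat\mu_a(s) \to \hat\mu(s)$ for every $s\in\mathbb R$ as $a\downarrow0$. Comparing the formula \eqref{eq: mu a} for $\hat\mu_a$ with $\hat\mu(s)=\exp\{\int(e^{ixs}-1)M(\rd x)\}$, it suffices to pass to the limit inside the Lévy-type integral and then use continuity of the exponential. The key pointwise statement is that, for each fixed $x\neq0$,
\begin{equation*}
\frac{|x|}{a\gamma}\,C_\nu\!\left(sa\tfrac{x}{|x|}\right)\longrightarrow isx \quad\text{as } a\downarrow0.
\end{equation*}
This follows because $\gamma=\int zL(\rd z)=\rE[Z]$ and $\zeta_1<\infty$ imply that $C_\nu$ is differentiable at $0$ with $C_\nu'(0)=i\gamma$, so $C_\nu(u)/u\to i\gamma$ as $u\to0$; applying this with $u=sax/|x|$ and multiplying by $|x|/(a\gamma)$ gives $isx$. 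Then the integrand $e^{(|x|/(a\gamma))C_\nu(\cdots)}-1$ converges pointwise to $e^{isx}-1$.

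The main work is to produce an $M$-integrable dominating function so that dominated convergence applies. For this I will use two bounds. First, because $\Re C_\nu\le 0$ by \eqref{eq: real part c is neg}, the exponent $(|x|/(a\gamma))C_\nu(sax/|x|)$ has non-positive real part, which yields the basic estimate $|e^w-1|\le |w|$ valid for all $w\in\mathbb C$ with $\Re w\le 0$ (write $e^w-1=\int_0^1 w\,e^{tw}\,\rd t$ and bound $|e^{tw}|\le1$). Second, by the triangle inequality and the elementary bound $|e^{izu}-1|\le|zu|$,
\begin{equation*}
|C_\nu(u)|\le\int_{-\infty}^\infty |e^{izu}-1|\,L(\rd z)\le|u|\,\zeta_1.
\end{equation*}
Combining these two with $u=sax/|x|$ gives
\begin{equation*}
\left|e^{(|x|/(a\gamma))C_\nu(sax/|x|)}-1\right|\le\frac{|x|}{a\gamma}\cdot|s|a\,\zeta_1=\frac{|s|\zeta_1}{\gamma}\,|x|.
\end{equation*}
Coupled with the trivial bound $|e^{w}-1|\le 2$ when $\Re w\le 0$, the integrand is dominated by $C_s\,(|x|\wedge 1)$ for the constant $C_s=2(|s|\zeta_1/\gamma\vee 1)$, which is $M$-integrable by \eqref{eq: levy meas}.

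Dominated convergence then yields
\begin{equation*}
\int_{-\infty}^\infty\!\left(e^{(|x|/(a\gamma))C_\nu(sax/|x|)}-1\right)M(\rd x)\longrightarrow\int_{-\infty}^\infty\!\left(e^{isx}-1\right)M(\rd x)=C_\mu(s),
\end{equation*}
and exponentiating gives $\hat\mu_a(s)\to\hat\mu(s)$ pointwise in $s$. Lévy's continuity theorem finishes the proof. The only genuinely delicate step is the derivation of the $M$-integrable bound on the integrand, since it requires combining the sign information $\Re C_\nu\le 0$ with the first-moment assumption $\zeta_1<\infty$; the rest of the argument is a standard Taylor expansion plus exchange of limit and integral.
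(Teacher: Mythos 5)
Your proof is correct and follows essentially the same route as the paper: convergence of characteristic functions via two applications of dominated convergence, using $\Re C_\nu\le 0$, the bound $|e^w-1|\le 2\wedge|w|$, and $|C_\nu(u)|\le\zeta_1|u|$ to obtain an $M$-integrable dominating function proportional to $1\wedge|x|$. The only cosmetic difference is that you justify the pointwise limit $\frac{|x|}{a\gamma}C_\nu(sa x/|x|)\to isx$ through the differentiability of $C_\nu$ at $0$ (which indeed follows from $\zeta_1<\infty$), whereas the paper derives it by l'H\^opital's rule together with dominated convergence over $L$.
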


In the case where $Z^-$ and $Z^+$ are Poisson processes, versions of this result can be found in \cite{Klebanov:Slamova:2013}, \cite{Grabchak:2018}, and Proposition 6.5 of \cite{Steutel:vanHarn:2004}. Our goal is to characterize the rate of convergence in $L^p$. Let $F$ be the cdf of distribution $\mu$ and let $F_a$ be the cdf of distribution $\mu_a$. Recall that, for $p\in[1,\infty)$, the $L^p$ distance between these distributions is given by
$$
\|F-F_a\|_p = \left(\int_{-\infty}^\infty \left|F(x)-F_a(x)\right|^p\rd x\right)^{1/p} 
$$
and for $p=\infty$ it is given by
$$
\|F-F_a\|_\infty = \sup_{x\in\mathbb R} \left|F(x)-F_a(x)\right|. 
$$
Since $ \left|F(x)-F_a(x)\right|\le1$ for each $x\in\mathbb R$, it is readily checked that for $1\le p_1\le p_2<\infty$ we have
$$
\|F-F_a\|_{p_2}^{p_2} \le \|F-F_a\|^{p_1}_{p_1}.
$$
In this context, the $L^\infty$ distance is sometimes called the Kolmogorov metric and the $L^1$ distance is sometimes called the Kantorovich or the Wasserstein metric. Many results about these and related distances can be found in \cite{Gibbs:Su:2002}, \cite{Bobkov:2016}, \cite{Arras:Houdre:2019}, and the references therein. We just note the following equivalent formulations of $L^1$ distance. We have
$$
\|F-F_a\|_1 = \inf \{\rE\left[|X-Y_a|\right]\} = \sup_h \left\{\left|\rE\left[h(X)\right]-\rE\left[h(Y_a)\right]\right|\right\},
$$
where the infimum is taken over all possible joint distributions of $X$ and $Y_a$ and the supremum is taken over all continuous functions $h$ satisfying the Lipschitz condition $|h(x)-h(y)|\le |x-y|$ for $x,y\in\mathbb R$.

Before stating our main results, we introduce several quantities. We do not, in general, assume these to be finite. Let 
$$
r_0 := \int_{\mathbb R} |\hat\mu(s)|\rd s, \ m_1 := \int_{-\infty}^\infty |x| M(\rd x) ,\  m_2 := \int_{-\infty}^\infty x^2 M(\rd x),
$$
and let 
$$
 \zeta_2 := \int_{-\infty}^\infty z^2 L(\rd z).
$$
It is well-known that if $r_0<\infty$, then $\mu$ has a pdf $f$ satisfying
$$
\esssup_{x\in\mathbb R} f(x)\le \frac{r_0}{2\pi}.
$$
Further, by Corollary 25.8 in \cite{Sato:1999} we have
$$
m_i<\infty \mbox{ if and only if } \rE[|X|^i]<\infty , \ \ i=1,2
$$
and it is readily checked that, when they are finite, $m_1\ge\rE[|X|]$, $m_2=\mathrm{Var}(X)$, and $\zeta_2=\mathrm{Var}(Z)$. We now give our main result.

\begin{thrm}\label{thrm: main gen}
Assume that $r_0,m_1,\zeta_1,\zeta_2$ are all finite.  For any $a>0$, 
$$
\left\| F_{a} - F \right\|_\infty \le  \sqrt a\left( \frac{1 }{\pi}e^{0.5m_1\zeta_2/\gamma} m_1\frac{\zeta_2}{2\gamma}+ \frac{12}{\pi^2}\right)  r_0= \mathcal O(a^{1/2})
$$
and for any $p\in[2,\infty)$ and any $a>0$
$$
\left\| F_{a} - F \right\|_p \le \left( \sqrt a e^{0.5m_1\zeta_2/\gamma} m_1\frac{\zeta_2}{4\gamma} r_0^{1-1/p}   +  a^{1/(2p)}4 (p-1)\right) = \mathcal O\left(a^{1/(2p)}\right).
$$
If, in addition, $m_2<\infty$, then for any  $p\in[1,\infty)$ and any $a>0$
\begin{eqnarray*}
\left\| F_{a} - F \right\|_p &\le& \left\| F_{a} - F \right\|_1^{1/p} \le C_a^{1/p} a^{1/(2p)}=  \mathcal O\left(a^{1/(2p)}\right),
\end{eqnarray*}
where
\begin{eqnarray*}
C_a =  \left(\left(\sqrt a +1\right)  e^{0.5m_1\zeta_2/\gamma} m_1+  \frac{\zeta_1}{\gamma} m_2 + 2\sqrt a m_1 +  e^{0.5 m_1 \zeta_2/\gamma} m_1^2\frac{\zeta_1}{\gamma} \right)\frac{\zeta_2}{2\gamma} r_0^{1/2} + 4 \pi .
\end{eqnarray*}
\end{thrm}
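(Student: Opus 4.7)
The plan is to reduce everything to a uniform Fourier estimate on $|\hat\mu_a(s)-\hat\mu(s)|$ and then feed this into $L^p$ smoothing inequalities. First, using \eqref{eq: mu a}, I would write $\hat\mu_a(s)-\hat\mu(s)=\hat\mu(s)(e^{D(s)}-1)$ with
$$
D(s)=\int_{-\infty}^\infty\left(e^{|x|C_\nu(sax/|x|)/(a\gamma)}-e^{ixs}\right)M(\rd x).
$$
Since $\zeta_2<\infty$, the Taylor bound $|e^{izt}-1-izt|\le(zt)^2/2$ gives $|C_\nu(t)-i\gamma t|\le t^2\zeta_2/2$; combining this with $\Re C_\nu\le 0$ from \eqref{eq: real part c is neg} and the standard Lipschitz estimate $|e^u-e^v|\le|u-v|e^{\max(\Re u,\Re v)}$ yields $|D(s)|\le as^2m_1\zeta_2/(2\gamma)$, whereupon $|e^{D}-1|\le|D|e^{|D|}$ produces the key estimate
$$
|\hat\mu_a(s)-\hat\mu(s)|\le|\hat\mu(s)|\,\frac{as^2m_1\zeta_2}{2\gamma}\exp\!\left(\frac{as^2m_1\zeta_2}{2\gamma}\right).
$$

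For the Kolmogorov bound I would plug this into Esseen's smoothing inequality with cutoff $T=1/\sqrt a$. The cutoff is chosen precisely so that $as^2\le 1$ on $[-T,T]$, which controls the exponential factor by $e^{0.5m_1\zeta_2/\gamma}$; the main integral then collapses to $(\sqrt a\,m_1\zeta_2/(2\gamma))\,e^{0.5m_1\zeta_2/\gamma}\,r_0/\pi$ after using $\int|\hat\mu|=r_0$, while the boundary term of Esseen (together with the density bound $\|f\|_\infty\le r_0/(2\pi)$) contributes $12r_0\sqrt a/\pi^2$.

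For the $L^p$ bounds the key intermediate goal is an $L^1$ estimate of order $\sqrt a$. Without assuming $m_2<\infty$, I would get this by coupling $X$ and $Y_a$ through their common $X^\pm$: the conditional variance of $aZ^\pm(X^\pm/(a\gamma))-X^\pm$ given $X^\pm$ is $aX^\pm\zeta_2/\gamma$, so Jensen gives $\rE|X-Y_a|\le\sqrt{2am_1\zeta_2/\gamma}$, and Kantorovich duality upgrades this to $\|F-F_a\|_1\le\sqrt{2am_1\zeta_2/\gamma}$. For $p\ge 2$, the first (smooth) term in the stated bound is produced by the interpolation $\|F-F_a\|_p^p\le\|F-F_a\|_\infty^{p-1}\|F-F_a\|_1$, which converts the linear $r_0$-dependence of $\|F-F_a\|_\infty$ into a $r_0^{1-1/p}$ factor, while the second (tail) term comes from the trivial inequality $\|F-F_a\|_p^p\le\|F-F_a\|_1$ supplemented by a Hausdorff--Young/Marcinkiewicz-style balancing that is responsible for the $4(p-1)$ constant. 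Under the extra hypothesis $m_2<\infty$, I would sharpen $\|F-F_a\|_1$ via Plancherel, namely $\|F-F_a\|_2^2=(2\pi)^{-1}\int|\hat\mu_a-\hat\mu|^2/s^2\,\rd s$, combined with Cauchy--Schwarz on a truncated window (producing the $r_0^{1/2}$ factor in $C_a$) and Chebyshev tail bounds using $m_2$; then $\|F-F_a\|_p\le\|F-F_a\|_1^{1/p}$ finishes.

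The hard part will be the exponential factor in the Fourier estimate: it is intrinsic to $|e^D-1|$, so cutting off at $T\asymp 1/\sqrt a$ is forced, and this is what dictates the $\sqrt a$ rate throughout. Assembling the long constant $C_a$ is the second pain point, since one must carefully balance the Plancherel window, the Cauchy--Schwarz radius, and the Chebyshev tails without losing the $\sqrt a$ order in $\|F-F_a\|_1$; this is what gives rise to the somewhat baroque sum of five contributions in the formula for $C_a$.
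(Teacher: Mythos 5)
Your $L^\infty$ argument coincides with the paper's: the same characteristic--function estimate $|\hat\mu_a(s)-\hat\mu(s)|\le|\hat\mu(s)|\,\frac{as^2m_1\zeta_2}{2\gamma}e^{as^2m_1\zeta_2/(2\gamma)}$ (Lemma \ref{lemma: char func bound}, proved exactly as you sketch from Lemmas \ref{lemma: bounds of exp}, \ref{lemma: bounds on c nu} and \eqref{eq: real part c is neg}) fed into Esseen's inequality \eqref{eq: Esseen inf} with $T=a^{-1/2}$, and it reproduces the stated constant. For the $L^p$ parts you take a genuinely different route. The paper stays on the Fourier side throughout: for $p\ge2$ it uses Bobkov's $L^p$ smoothing inequality \eqref{eq: Esseen p} (a Hausdorff--Young type bound, which is exactly where the $4(p-1)T^{-1/p}$ term comes from --- not from $\|F-F_a\|_p^p\le\|F-F_a\|_1$ plus ``balancing'' as you suggest), and for $L^1$ it uses \eqref{eq: Esseen one}, which involves the derivative difference $\hat\mu_a'-\hat\mu'$; that derivative term is where $\zeta_1$ and $m_2$ enter $C_a$. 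Your alternative for $L^1$ --- couple $X$ and $Y_a$ through the common $X^\pm$, note the conditional mean is $X$ and the conditional variance is $aX^\pm\zeta_2/\gamma$, then apply Jensen and Kantorovich duality to get $\|F-F_a\|_1\le\sqrt{2am_1\zeta_2/\gamma}$ --- is correct, elementary, and in fact stronger than the paper's third claim: it requires neither $m_2<\infty$ nor $r_0<\infty$, and combined with $\|F-F_a\|_p^p\le\|F-F_a\|_1$ (or with interpolation against the $L^\infty$ bound, which even yields $O(\sqrt a)$ in every $L^p$) it delivers all the stated rates $O(a^{1/(2p)})$. What this buys is simplicity and weaker hypotheses; what it loses is the paper's explicit displayed constants for the $p\ge2$ bound and for $C_a$, which are tied to Bobkov's Corollaries 7.2 and 8.3 and are not recovered by your interpolation product.

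Two concrete corrections. First, your account of the $4(p-1)$ term is off: it is the smoothing-error term of the $L^p$ Esseen-type inequality \eqref{eq: Esseen p}, not something obtainable from $\|F-F_a\|_p^p\le\|F-F_a\|_1$. Second, the proposed ``sharpening'' of $\|F-F_a\|_1$ via Plancherel, a Cauchy--Schwarz window, and Chebyshev tails would actually degrade the rate: beyond $|s|>T$ one can only use $|\hat\mu_a-\hat\mu|\le2$, so $\|F-F_a\|_2^2\lesssim a\,r_0+T^{-1}$, giving $\|F-F_a\|_2=O(a^{1/4})$ at best and, after optimizing the window radius against $O(1/R)$ tails, only $\|F-F_a\|_1=O(a^{1/6})$ --- worse than both your coupling bound and the paper's $C_a\sqrt a$. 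The paper circumvents this precisely through the $\hat\mu_a'-\hat\mu'$ term in \eqref{eq: Esseen one}. Fortunately this step is redundant in your plan, since the coupling bound already gives the $O(\sqrt a)$ order in $L^1$; but as written that branch of your argument would fail to meet the claimed rate.
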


It is well-known that the $L^1$ and $L^\infty$ distances provide bounds on a number of other distances, including the L\'evy, discrepancy, and Prokhorov metrics, see \cite{Gibbs:Su:2002}. Thus, our bounds give rates of convergence for these distances as well. We now give a better rate of convergence for the case where $\mu$ has a TS distribution.

\begin{thrm}\label{thrm: main TS}
Let $\mu=\TS(g,\eta_-,\eta_+)$ and assume that $m_1,\zeta_1,\zeta_2$ are all finite.  For any $a>0$, 
$$
\left\| F_{a} - F \right\|_\infty \le \mathcal O\left( a^{\frac{1}{2-\alpha}} \right)
$$
and for any $p\in[2,\infty)$ and any $a>0$
$$
\left\| F_{a} - F \right\|_p \le  \mathcal O\left(a^{\frac{1}{p(2-\alpha)}} \right).
$$
If, in addition, $m_2<\infty$, then for any $p\in[1,\infty)$ and any $a>0$
\begin{eqnarray*}
\left\| F_{a} - F \right\|_p \le \left\| F_{a} - F \right\|_1^{1/p} \le \mathcal O\left( a^{\frac{1}{p(2-\alpha)}} \right).
\end{eqnarray*}
\end{thrm}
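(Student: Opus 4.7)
The plan is to revisit the proof strategy of Theorem \ref{thrm: main gen} and exploit the additional structure of TS distributions, specifically the exponential decay of $|\hat\mu(s)|$ in $|s|^\alpha$. This extra decay allows for a more aggressive choice of truncation parameter in the underlying Esseen-type smoothing inequalities, replacing the exponent $1/2$ by $1/(2-\alpha)$.

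The first step is to establish that, for $\mu=\TS(g,\eta_-,\eta_+)$, one has $|\hat\mu(s)|\le Ke^{-c|s|^\alpha}$ for constants $K,c>0$. Since $g(x)\to 1$ as $x\to 0$, there exists $\delta>0$ with $g\ge 1/2$ on $(-\delta,\delta)$, so
$$-\Re C_\mu(s)\ge \tfrac{1}{2}(\eta_++\eta_-)\int_0^\delta \frac{1-\cos(xs)}{x^{1+\alpha}}\,\rd x,$$
and the substitution $u=xs$ recovers the standard stable lower bound $c|s|^\alpha$ for $|s|$ large. The analogous bound $|\hat\mu_a(s)|\le K'e^{-c'|s|^\alpha}$, uniformly for small $a$ on a suitable range of $s$, is then derived from formula \eqref{eq: mu a}: one expands $\Re C_{\mu_a}(s)$, invokes $\Re C_\nu(u)\le -\tfrac{1}{2}\zeta_2 u^2+O(|u|^3)$ near $u=0$ together with \eqref{eq: real part c is neg}, and splits the range of $x$ to recover the same exponential decay in $|s|^\alpha$ with constants independent of small $a$.

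The final step is to feed these bounds into the smoothing inequalities from the proof of Theorem \ref{thrm: main gen}. Combining the first-order Taylor estimate $|\hat\mu_a(s)-\hat\mu(s)|\le Cas^2$ of that proof with the exponential factor $\max\{|\hat\mu(s)|,|\hat\mu_a(s)|\}\le Ke^{-c|s|^\alpha}$, the truncation parameter $T$ in the Esseen inequality can be chosen on the order of $a^{-1/(2-\alpha)}$; balancing the two sides yields the claimed rate $\mathcal O(a^{1/(2-\alpha)})$ for $\|F_a-F\|_\infty$. The $L^p$ bounds for $p\in[2,\infty)$ follow from the corresponding Plancherel-type argument of Theorem \ref{thrm: main gen} with the same choice of $T$, while the $L^1$ bound under $m_2<\infty$ is obtained analogously via the moment route; the remaining $L^p$ rates follow by the interpolation $\|F_a-F\|_p^p\le \|F_a-F\|_\infty^{p-1}\|F_a-F\|_1$ noted in the excerpt.

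The principal obstacle is establishing the uniform-in-$a$ exponential bound $|\hat\mu_a(s)|\le K'e^{-c'|s|^\alpha}$. Formula \eqref{eq: mu a} couples $M$ and $\nu$ through the factor $|x|/(a\gamma)$, which degenerates as $a\downarrow 0$, so that the inner exponential behaves very differently for $x$ near and away from $0$. A careful splitting of the integration range in $x$, followed by matching the contribution near $0$ against the stable-type computation of Step~1, is needed to extract a clean exponential bound with the correct exponent $\alpha$, matching that of $\hat\mu$, and thereby ensure that the final balance produces the stated exponent $1/(2-\alpha)$ rather than something worse.
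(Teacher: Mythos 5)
Your overall architecture is the right one and matches the paper's in its key balancing step: establish $|\hat\mu(s)|\le e^{-A|s|^\alpha}$ for $|s|>1$ (your Step 1 is essentially the paper's computation, and it also yields $r_0<\infty$, which \eqref{eq: Esseen inf} requires), then take $T\asymp a^{-1/(2-\alpha)}$ in the smoothing inequalities so that the dominant term $r_0/T$ produces the rate $a^{1/(2-\alpha)}$. The genuine gap is the lemma your route hinges on and which you yourself flag as the principal obstacle: the uniform-in-$a$ bound $|\hat\mu_a(s)|\le K'e^{-c'|s|^\alpha}$ for $|s|\le T$. Without it your only difference estimate is $|\hat\mu_a(s)-\hat\mu(s)|\le Cas^2$, and then $\int_0^T Ca s\,\rd s\asymp aT^2\asymp a^{-\alpha/(2-\alpha)}\to\infty$, so the argument collapses; thus this lemma is not optional, and as sketched it does not go through under the stated hypotheses. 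The expansion $\Re C_\nu(u)\le-\tfrac12\zeta_2u^2+O(|u|^3)$ requires $\zeta_3<\infty$, which Theorem \ref{thrm: main TS} does not assume (you would instead need $\Re C_\nu(u)\le -cu^2$ on a fixed neighborhood of $0$, which holds since $\zeta_2>0$). More seriously, from \eqref{eq: mu a} one has
$$
\log|\hat\mu_a(s)|=\int_{-\infty}^\infty\Bigl(e^{\frac{|x|}{a\gamma}\Re C_\nu(sa\frac{x}{|x|})}\cos\Bigl(\tfrac{|x|}{a\gamma}\Im C_\nu\bigl(sa\tfrac{x}{|x|}\bigr)\Bigr)-1\Bigr)M(\rd x),
$$
and the damping factor multiplies a cosine of the \emph{imaginary} part, which can be negative, so making $\Re C_\nu$ more negative pushes the integrand up, not down; one must also control $\Im C_\nu(sa)/(a\gamma)\approx s$ and redo the stable-type computation for the oscillatory term. (That some restriction to $|s|\le T$ is unavoidable is clear from the Poisson case, where $\hat\mu_a$ is $2\pi/a$-periodic.) These points are repairable, e.g.\ via $e^{u}\cos\theta-1\le\cos\theta-1+|u|$ for $u\le0$ combined with \eqref{eq: real part c is neg}, but they constitute the real content of your proof and are not carried out.

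You should also note that this obstacle can be bypassed entirely, which is what the paper does: Lemma \ref{lemma: char func bound} gives the difference bound already multiplied by $|\hat\mu(s)|$, namely $|\hat\mu_a(s)-\hat\mu(s)|\le a|\hat\mu(s)|s^2e^{0.5as^2m_1\zeta_2/\gamma}\,m_1\zeta_2/(2\gamma)$, so no decay of $|\hat\mu_a|$ is ever needed; choosing $T$ so that $aT^{2-\alpha}m_1\zeta_2/\gamma=A$ lets the growth factor $e^{0.5as^2m_1\zeta_2/\gamma}\le e^{0.5A|s|^\alpha}$ be absorbed by $e^{-A|s|^\alpha}$ on $1\le|s|\le T$, and the same choice of $T$ then gives all three bounds (the crude estimate $|\hat\mu_a(s)-\hat\mu(s)|\le Cas^2$ you cite is, incidentally, not the bound used in the proof of Theorem \ref{thrm: main gen} but the one derived in the extensions section). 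Your concluding interpolation $\|F_a-F\|_p^p\le\|F_a-F\|_\infty^{p-1}\|F_a-F\|_1$ is valid, though the stated conclusion only needs $\|F_a-F\|_p^p\le\|F_a-F\|_1$ as in the paper. So: correct skeleton and correct exponent bookkeeping, but the central technical lemma of your particular route is left unproved and its sketch needs both a weaker-moment replacement for the third-order expansion and an honest treatment of the oscillatory factor; either supply that argument or switch to the multiplicative bound of Lemma \ref{lemma: char func bound}.
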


In the proof we will see that we always have $r_0<\infty$ in this case. 

\section{Poisson Mixtures and Simulation}\label{sec: sub Pois}

Our main results imply that $\mu_a \approx \mu$ when $a$ is small, which suggests that we can approximately simulate from $\mu$ by simulating from $\mu_a$. In practice, simulation from $\mu_a$ is often easier than simulation from $\mu$. This is particularly true when the L\'evy processes $Z^+,Z^-$ are Poisson processes. In this case, the distribution of $\mu_a$ is a scaled Poisson mixture. A general review of Poisson mixtures can be found in \cite{Karlis:Xekalaki:2005} or Section VI.7 of \cite{Steutel:vanHarn:2004}. Our discussion primarily extends results about discrete TS distributions given in \cite{Grabchak-DTS}. 

Throughout this section we assume that $M((-\infty,0])=0$, which means that $\mu=\ID_0(M)$ is a subordinator. Further, we take $L(\rd z) = \delta_1(\rd z)$. This means that $\nu=\ID_0(L)=\Pois(1)$ and thus that the L\'evy process $Z^+=\{Z^+(t):t\ge0\}$ with $Z^+(1)\sim \nu$ is a Poisson process with rate $1$ and that $\gamma=1$. Let $X\sim \ID_0(M)$ be independent of $Z^+$ and, for any $a>0$, set
$$
Y_a = a Z^+(X/a).
$$
As before, the distribution of $Y_a$ is denoted by $\mu_a$. In this case, we sometimes write $\mu_a=\mathrm{PM}(M,a)$. Note that the support of this distribution is concentrated on $\{0,a,2a,3a,\dots\}$.

Specializing \eqref{eq: mu a} to the current situation shows that the characteristic function of $Y_a$ is given by
$$
\hat\mu_a(z) = \rE\left[e^{izY_a}\right]= \exp\left\{\int_{0}^\infty \left(e^{x (e^{iza}-1)/a}-1\right) M(\rd x)\right\}, \ \ \ z\in\mathbb R.
$$
Next, we introduce several useful quantities. These are closely related to the canonical sequence of discrete infinitely divisible distributions discussed in \cite{Steutel:vanHarn:2004}. For $a>0$, let
$$
\ell_k^{(a)} = \int_{0}^{\infty} e^{-x/a} (x/a)^{k} M(\rd x), \quad k = 1,2, \ldots
$$
and let 
$$
\ell_+^{(a)} = \sum_{k=1}^{\infty} \frac{\ell_k^{(a)}}{k!} = \int_{0}^{\infty} (1-e^{-x/a}) M(\rd x).
$$
We now characterize the distribution $\mu_a$ in terms of these quantities.

\begin{prop} \label{prop: facts mixed Pois}
\begin{enumerate}
    \item The characteristic function of $\mu_a$ is given by
    $$
  \hat\mu_a(s) =   \exp \left\{\sum_{k=1}^{\infty}  \frac{\ell_k^{(a)}}{k!}\left(e^{isa k} -1\right)\right\} = e^{-\ell_+^{(a)}} \exp \left\{\sum_{k=1}^{\infty}  \frac{\ell_k^{(a)}}{k!}e^{isa k} \right\}, \ \ s \in \mathbb R.
    $$   
   \item  We have $\mu_a=\ID_0(M^{(a)})$, where the \Levy measure $M^{(a)}$ is concentrated on $\{a,2a,\ldots\}$ with
    $$
    M^{(a)}(\{a k\}) = \frac{1}{k!} \ell_k^{(a)}, \quad k = 1,2,\ldots,
    $$
    and $M^{(a)} ((0, \infty)) = \ell_+^{(a)} < \infty$.
     \item Let $p_k^{(a)} = P(Y_a =  ak)$ for $k = 0, 1, 2, \ldots$. We have $p_0 = e^{- \ell_+^{(a)}}$ and
    $$
    p_k^{(a)} = \frac{1}{k} \sum_{j=0}^{k-1} \frac{\ell_{k-j}^{(a)}}{(k-j-1)!} p_j^{(a)}, \quad k = 1,2,\ldots  .
    $$
    \item Let $M_1^{(a)}$ be the probability measure on $\{a,2a, \ldots\}$ with 
    $$
    M_1^{(a)}(\{ak\}) = \frac{M^{(a)}(\{ak\})}{M^{(a)}((0, \infty))} = \frac{\ell_k^{(a)}}{k! \ell_+^{(a)}}, \quad k = 1,2,\ldots.
    $$
    If $N \sim \Pois\left(\ell_+^{(a)}\right)$ and $W_1, W_2, \ldots \stackrel{iid}{\sim} M_1^{(a)}$ are independent of $N$, then
    $$
    Y_a\eqd \sum_{i=1}^{N} W_i .
    $$
\end{enumerate}
\end{prop}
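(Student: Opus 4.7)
The plan is to establish part 1 by an explicit computation starting from \eqref{eq: mu a}, and to extract parts 2, 3, and 4 as essentially mechanical consequences.

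For part 1, I would specialize \eqref{eq: mu a} to $\nu=\Pois(1)$, for which $C_\nu(s)=e^{is}-1$ and $\gamma=1$. Since $M$ is concentrated on $(0,\infty)$, this immediately yields
$$
\hat\mu_a(s) = \exp\left\{\int_0^\infty \left(e^{x(e^{isa}-1)/a}-1\right) M(\rd x)\right\}.
$$
The key algebraic step is to rewrite the integrand by recognizing $e^{y(e^{it}-1)}=e^{-y}\sum_{k=0}^\infty \frac{y^k}{k!}e^{itk}$ as a Poisson pgf (with $y=x/a$, $t=sa$), and then regrouping with the aid of $\sum_{k=0}^\infty e^{-y}y^k/k!=1$ to obtain
$$
e^{x(e^{isa}-1)/a}-1 = \sum_{k=1}^\infty e^{-x/a}\frac{(x/a)^k}{k!}\left(e^{isak}-1\right).
$$
Interchanging sum and integral is then justified by dominated convergence: the tail is bounded by $2\sum_{k=1}^\infty \ell_k^{(a)}/k!=2\int_0^\infty(1-e^{-x/a})M(\rd x)$, which is finite because $1-e^{-x/a}\le(x/a)\wedge 1$ and \eqref{eq: levy meas} holds. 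This yields both displayed forms of $\hat\mu_a$.

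Part 2 is then immediate: the expression in part 1 reads as $\exp\{\int_0^\infty(e^{isx}-1)M^{(a)}(\rd x)\}$, where $M^{(a)}$ is the atomic measure on $\{ka:k\ge 1\}$ with mass $\ell_k^{(a)}/k!$ at $ka$. The identity $M^{(a)}((0,\infty))=\ell_+^{(a)}$ is precisely the computation above, which also establishes finiteness.

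For part 3, I would manipulate the generating function identity. Setting $G(z)=\sum_{k\ge 0}p_k^{(a)}z^k$ and using that $G(e^{isa})=\hat\mu_a(s)$, part 1 gives $G(z)=\exp\{\sum_{k\ge 1}(\ell_k^{(a)}/k!)(z^k-1)\}$ for $|z|\le 1$. Differentiating yields $G'(z)=G(z)\sum_{k\ge 1}(\ell_k^{(a)}/(k-1)!)z^{k-1}$, and matching coefficients of $z^{k-1}$ on both sides produces the stated recursion; $p_0^{(a)}=e^{-\ell_+^{(a)}}$ follows from $G(0)$. Part 4 is the standard compound Poisson representation: since $M^{(a)}=\ell_+^{(a)} M_1^{(a)}$ is a finite L\'evy measure with $M_1^{(a)}$ a probability measure, the characteristic function from part 1 coincides with that of $\sum_{i=1}^N W_i$, and uniqueness of characteristic functions gives the distributional equality. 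The main obstacle is the algebraic identity and Fubini interchange in part 1; once that is in hand, parts 2--4 are routine consequences.
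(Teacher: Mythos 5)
Your proposal is correct, and for parts 1, 2, and 4 it is essentially the paper's argument: the paper proves part 1 by exactly the algebraic identity you use (run in the opposite direction, from the sum back to $\int_0^\infty(e^{(e^{isa}-1)x/a}-1)M(\rd x)$), reads off part 2 immediately, and disposes of part 4 by a standard conditioning/compound-Poisson argument equivalent to your characteristic-function match; your explicit Fubini justification via the bound $2(1-e^{-x/a})\le 2((x/a)\wedge 1)$ is a welcome addition that the paper leaves implicit. Where you genuinely diverge is part 3. The paper computes $p_k^{(a)}=P(Z(X/a)=k)=\frac{a^{-k}}{k!}\rE[e^{-X/a}X^k]=\frac{a^{-k}}{k!}(-1)^k\psi^{(k)}(1/a)$ with $\psi$ the Laplace transform of $X$, notes $f^{(k)}(1/a)=(-a)^k\ell_k^{(a)}$ for $f=\log\psi$, and then invokes a known recursion for derivatives of $e^{f}$ (Lemma 1 of the cited discrete TS paper) to obtain the stated formula; you instead differentiate the probability generating function $G(z)=\exp\{\sum_{k\ge1}(\ell_k^{(a)}/k!)(z^k-1)\}$ and match power-series coefficients, which yields the identical recursion. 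The two routes are close cousins (the same Leibniz-type structure), but yours is more self-contained, avoiding the external lemma, while the paper's makes the mixed-Poisson structure explicit through the Laplace transform, which is the classical way such Panjer-type recursions are derived for Poisson mixtures. One small point worth a sentence in a write-up: the identity for $G$ on the closed unit disc should be justified either by uniqueness of characteristic functions for lattice distributions or, more directly, by repeating your part 1 conditioning computation with $e^{isa}$ replaced by $z$, $|z|\le1$; this is routine and not a gap.
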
 

Here, $M^{(a)}$ is a discretization of $M$. Parts 3 and 4 can be used to simulate from $\mu_a$. First we describe the algorithm suggested by Part 3. It is a version of the inverse transform method.\\

\noindent{\bf Algorithm 1}: Simulation from $\mu_a=\mathrm{PM}(M,a)$.\\
\textbf{1.} Simulate $U \sim U(0,1)$.\\
\textbf{2.} Evaluate $Y = \min\left\{m=0,1,2, \ldots | \sum_{k=0}^m p^{(a)}_k>U\right\}$ and return $aY$.\\

This method often works well. However, for heavier tailed distributions, we may need to calculate $p^{(a)}_k$ for a large value of $k$, which can be inefficient. An alternate simulation method is suggested by Part 4. \\

\noindent{\bf Algorithm 2}: Simulation from $\mu_a=\mathrm{PM}(M,a)$.\\
\textbf{1.} Simulate $N \sim \Pois(\ell_+^{(a)})$.\\
\textbf{2.} Simulate $W_1, W_2, \ldots, W_N \stackrel{iid}{\sim} M_1^{(a)}$.\\
\textbf{3.} Return $\sum_{i=1}^{N} W_i$.  \\

To use Algorithm 2 we need a way to simulate from $M_1^{(a)}$. Following ideas in \cite{Grabchak-DTS} for discrete TS distributions, we can represent  $M_1^{(a)}$ as a mixture of scaled zero-truncated Poisson distributions. Let $\Pois_+(\lambda)$ denote the zero truncated Poisson distribution with parameter $\lambda$ and pmf 
$$
p_{\lambda}(k) = \frac{\lambda^{k} e^{- \lambda}}{k! (1-e^{-\lambda})}, \ \ k=1,2,\dots.
$$
We have
\begin{align*}
M_1^{(a)}(\{ak\}) &= \frac{1}{\ell_+^{(a)}} \int_{0}^{\infty} \frac{(x/a)^k e^{-x/a}}{k! (1-e^{-x/a})} (1-e^{-x/a}) M(\rd x) \\
           &= \int_{0}^{\infty} p_{x/a}(k) M_2^{(a)}(\rd x), 
\end{align*}
where 
$$
M_2^{(a)}(\rd x)= \frac{1}{\ell_+^{(a)}} (1-e^{-x/a}) M(\rd x)
$$
is a probability measure. Thus, we can simulate from $M_1^{(a)}$ as follows.\\

\noindent{\bf Algorithm 3}: Simulation from $M_1^{(a)}$.\\
\textbf{1.} Simulate $\lambda \sim M_2^{(a)}$.\\
\textbf{2.} Simulate $W \sim \Pois_+(\lambda)$ and return $aW$.\\

In this paper, to simulate from $\Pois_+(\lambda)$, we use {\em rztpois} method in the actuar package for the statistical software R, see \cite{Dutang}. Simulation from $M_2^{(a)}$ depends on the structure of $M$. For certain discrete TS distributions and $a=1$ two methods were given in \cite{Grabchak-DTS}. We now extend them to the case of general $a>0$.  For TS distributions, $M$ is of the form \eqref{eq: TS Levy meas} and, to ensure $M((-\infty,0])=0$, we take $\eta_-=0$. It follows that, in this case, 
$$
M_2^{(a)}(\rd x)= \frac{\eta_+}{\ell_+^{(a)}} (1-e^{-x/a}) g(x)x^{-1-\alpha}1_{[x>0]}\rd x.
$$
Let
$$
M_3^{(a)}(\rd x)= \frac{1}{K_a} (1-e^{-x}) g(xa)x^{-1-\alpha} 1_{[x>0]} \rd x,
$$
where $K_a=\int_0^\infty (1-e^{-x}) q(xa)x^{-1-\alpha} \rd x= a^\alpha\ell_+^{(a)}/\eta_+$ is a normalizing constant. It is easily checked that, if $Y\sim M_3^{(a)}$, then $aY\sim M_2^{(a)}$. %Thus it suffices to simulate from $M_3^{(a)}$. 
Two rejection sampling methods for simulating from $M_3^{(a)}$ are given in \cite{Grabchak-DTS}.  Toward this end, let $C_g=\esssup_{x>0} g(x)$ and set
$$
\varphi_1(x) = C_g^{-1}\frac{g(xa)(1-e^{-x})}{x\wedge1}, \ \ x>0.
$$

\noindent \textbf{Algorithm 4.} Simulation from $M_2^{(a)}$ for TS distributions.\\
\textbf{1.} Independently simulate $U,U_1,U_2\iid U(0,1)$ and set $V=U_1^{1/(1-\alpha)}U_2^{-1/\alpha}$.\\
\textbf{2.} If $U\le \varphi_1(V)$ return $aV$, otherwise go back to step 1.\\

On a given iteration, the probability of acceptance is $K_a \alpha(1-\alpha)/C_g$. In some cases, we can increase the acceptance rate by using a different trial distribution. The second method works when there exist $p,C,\zeta>0$ such that, for Lebesgue a.e.\ $x$,
\begin{eqnarray}\label{eq: cond on q}
g(x) \le C e^{-\zeta x^p}.
\end{eqnarray}
In this case let
$$
\varphi_2(x) = \frac{g(x)(1-e^{-x})}{Cxe^{-\zeta x^p}} ,\ x>0.
$$
We get the following rejection sampling algorithm, where, on a given iteration, the probability of acceptance is 
$
\frac{K_a p\zeta^\frac{1-\alpha}{p}}{C\Gamma\left(\frac{1-\alpha}{p}\right)}
$.\\

\noindent \textbf{Algorithm 5.} Simulation from $M_2^{(a)}$ for TS distributions when \eqref{eq: cond on q} holds.\\
\textbf{1.} Independently simulate $U\sim U(0,1)$ and $V\sim \Ga((1-\alpha)/p,\zeta)$.\\
\textbf{2.} If $U\le \varphi_2(V^{1/p})$ return $aV^{1/p}$, otherwise go back to step 1.\\

Note that, in this section, the assumption that $M((-\infty,0])=0$ is without loss of generality. When it does not hold, we can decompose $M$ into $M^+$ and $M^-$ as in \eqref{eq: M +} and \eqref{eq: M -}. We can then independently simulate $Y_{a,+}\sim \mathrm{PM}(M^+,a)$ and $Y_{a,-}\sim \mathrm{PM}(M^-,a)$ and take 
\begin{eqnarray*}%\label{eq: Ya diff}
Y_a = Y_{a,+}-Y_{a,-}.
\end{eqnarray*}

\section{Simulations}\label{sec: sims}

In this section we give a small simulation study to illustrate our methodology for approximate simulation from a subordinator $\mu$ by simulating from $\mu_a$ with some small $a>0$. We focus on two classes of TS distributions: the class of classical tempered stable (CTS) distributions and the class of TS distributions with power tempering (PT distributions). CTS distributions are, arguably, the best known and most commonly used TS distributions, see \cite{Kuchler:Tappe:2013} for a survey. While exact simulation methods for CTS distributions are well-known, see e.g.\ \cite{Kawai:Masuda:2011} or \cite{Hofert:2011}, we use them to illustrate our methodology because they are so popular. On the other hand, the only known simulation methods for PT distributions are either approximate or computationally intensive, see \cite{Grabchak:2019}. For a discussion of PT distributions and their applications, see \cite{Grabchak:2016} or \cite{Grabchak:2021b}. 
In the symmetric case, methods to numerically evaluate pdfs, cdfs, and quantiles of CTS and PT distributions are given in the SymTS package for the statistical software R.

We begin with the class of CTS distributions. Distribution $\TS(\eta_-,\eta_+,g)$ is CTS if
$$
g(x) = e^{-|x|/\ell_-}1_{[x\le0]} + e^{-x/\ell_+} 1_{[x\ge0]}
$$
for some $\ell_-,\ell_+>0$. It is often convenient to take $\eta_-=c_-\ell_-^\alpha$ and $\eta_+=c_+\ell_+^\alpha$, where $c_-,c_+\ge0$ with $c_-+c_+>0$ are parameters. In this case  
we denote this distribution by CTS$_\alpha(c_-,\ell_-,c_+,\ell_+)$. When $c_-=0$ we get an important subset of the class of Tweedie distributions, see \cite{Tweedie:1984}.

Our methodology for approximate simulation from $\mu=$CTS$_\alpha(c_-,\ell_-,c_+,\ell_+)$ is as follows. First choose $a>0$ small, then independently simulate $Y_{a,+}\sim\mathrm{PM}(M^+,a)$ and $Y_{a,-}\sim\mathrm{PM}(M^-,a)$, where $M^+(\rd x) = c_+\ell_+^\alpha e^{-x/\ell_+}x^{-1-\alpha} 1_{[x>0]}\rd x$ and $M^-(\rd x) = c_-\ell_-^\alpha e^{-x/\ell_-}x^{-1-\alpha} 1_{[x>0]}\rd x$. The random variable $Y_a = Y_{a,+}-Y_{a,-}$ gives an approximate simulation from $\mu$. We denote the distribution of $Y_a$ by $\mu_a$. We can simulate $Y_{a,+}$ and $Y_{a,-}$ using Algorithm 1 or Algorithm 2. In this paper, we use Algorithm 2, where we simulate from $M_1^{(a)}$ using Algorithm 3. To simulate from $M^{(a)}_2$ we can use Algorithm 4 or Algorithm 5. We select the one that has the larger probability of acceptance. Acceptance probabilities for several choices of the parameters are given in Table \ref{table: accept probs}.

\begin{table}
\begin{center}
\begin{tabular}{c|c|c|c}
$\alpha$ & $a$ &  Algorithm 4   & Algorithm 5  \\ \hline
       & $0.5$     & $0.1739$    & $0.7568$\\
$0.25$ & $10^{-1}$ & $0.3473$    & $0.4521$\\             
       & $10^{-2}$ & $0.5780$    & $0.1338$\\
       & $10^{-4}$ & $0.8098$    & $0.0059$\\ \hline
       & $0.5$     & $0.3671$    & $0.8284$\\
$0.50$ & $10^{-1}$ & $0.5745$    & $0.5798$\\             
       & $10^{-2}$ & $0.7697$    & $0.2457$\\
       & $10^{-4}$ & $0.8738$    & $0.0279$\\ \hline
       & $0.5$     & $0.6180$    & $0.9091$\\
$0.75$ & $10^{-1}$ & $0.7681$    & $0.7556$\\             
       & $10^{-2}$ & $0.8718$    & $0.4822$\\
       & $10^{-4}$ & $0.9050$    & $0.1583$
\end{tabular}
\caption{Acceptance probabilities for Algorithms 4 and 5 for several choices of $\alpha$ and $a$. In all cases we take $c_-=c_+ = 1$ and $\ell_-=\ell_+ = 0.5$.}
\label{table: accept probs}
\end{center}
\end{table}

We now illustrate the performance of our methodology.  For simplicity, we focus on the symmetric case. In this study we choose $\alpha \in \{0.25, 0.5, 0.75\}$, $c_-=c_+ = 1$, and $\ell_-=\ell_+ = 0.5$.  We take $a \in \{0.5, 0.1, 0.01, 0.0001\}$ to demonstrate how $\mu_a$ approaches $\mu$. The results of our simulations are given in Figure \ref{pdf_tweedie} and Table \ref{tests_tweedie}.

First, for each choice of the parameters and each value of $a$, we simulate $5000$ observations. In Figure \ref{pdf_tweedie} we give two types of diagnostic plots. First, we compare kernel density estimates (KDE) of our observations from $\mu_a$ with the density of $\mu$. We can see that, as $a$ decreases, the KDE approaches the density of $\mu$. Second, we present qq-plots and again we see that, as $a$ decreases, the qq-plots approach the diagonal line. 

Next, we perform  formal goodness-of-fit testing using the Kolmogorov-Smirnov (KS) and Cramer-Von Mises (CVM) tests. Here, for each choice the parameters and of $a$, we simulate $100$ samples each comprised of $5000$ observations. For each sample we evaluate the $p$-values of the KS and the CVM tests. We then average these together over the $100$ samples. Recall that under the null hypothesis, the $p$-value follows a $U(0,1)$ distribution. Thus, when averaging over many runs, we expect the average of the $p$-values to be close to $0.5$. The results are given in Table \ref{tests_tweedie}. We can see that the means of the $p$-values get closer to $0.5$ as $a$ decreases. Taken together, the goodness-of-fit results given in  Figure \ref{pdf_tweedie} and Table \ref{tests_tweedie} suggest that the approximation works well for $a=10^{-4}$.

\begin{figure}
\begin{center}
\begin{tabular}{cc}
\includegraphics[scale=.22]{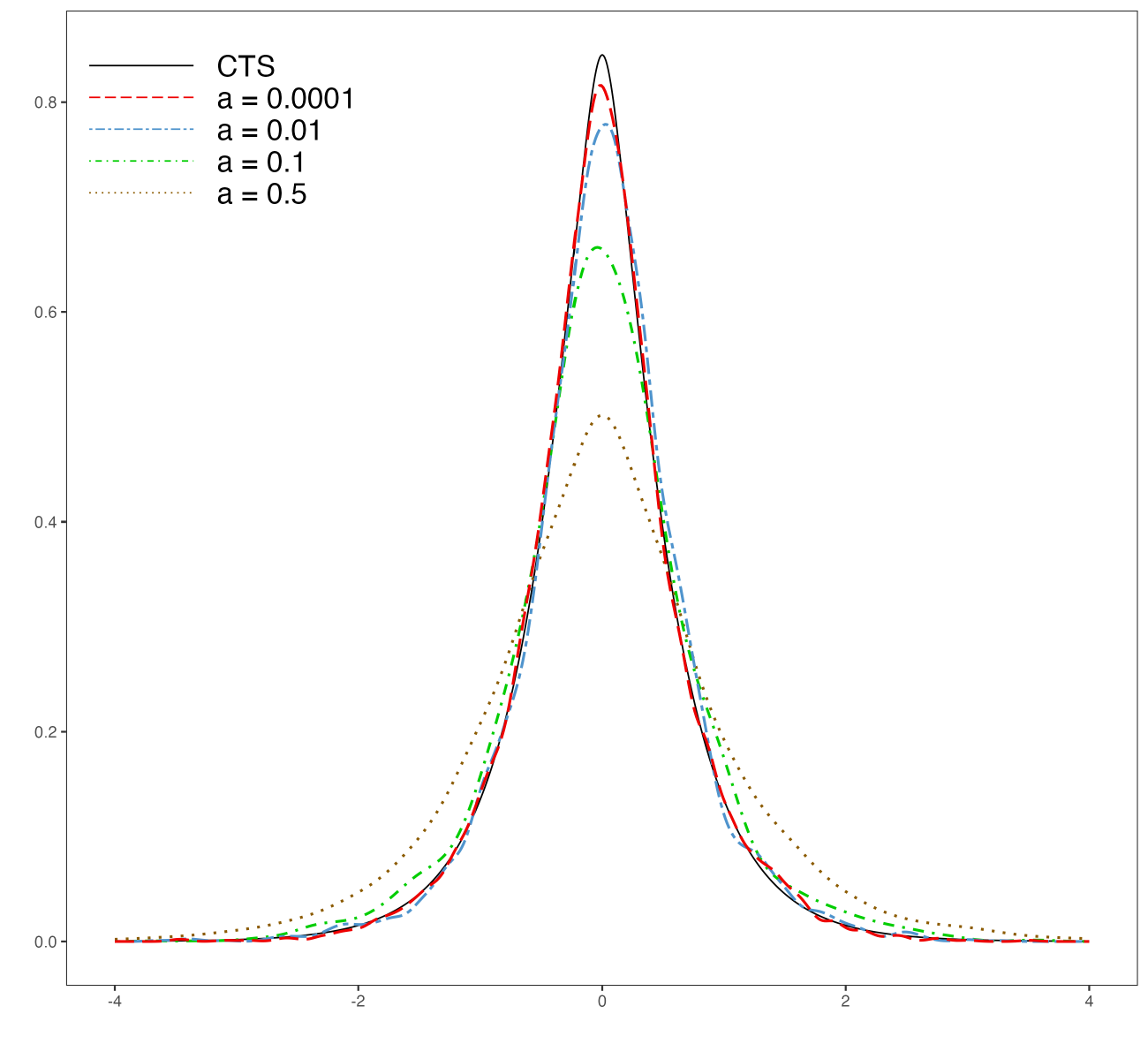} &\includegraphics[scale=.22]{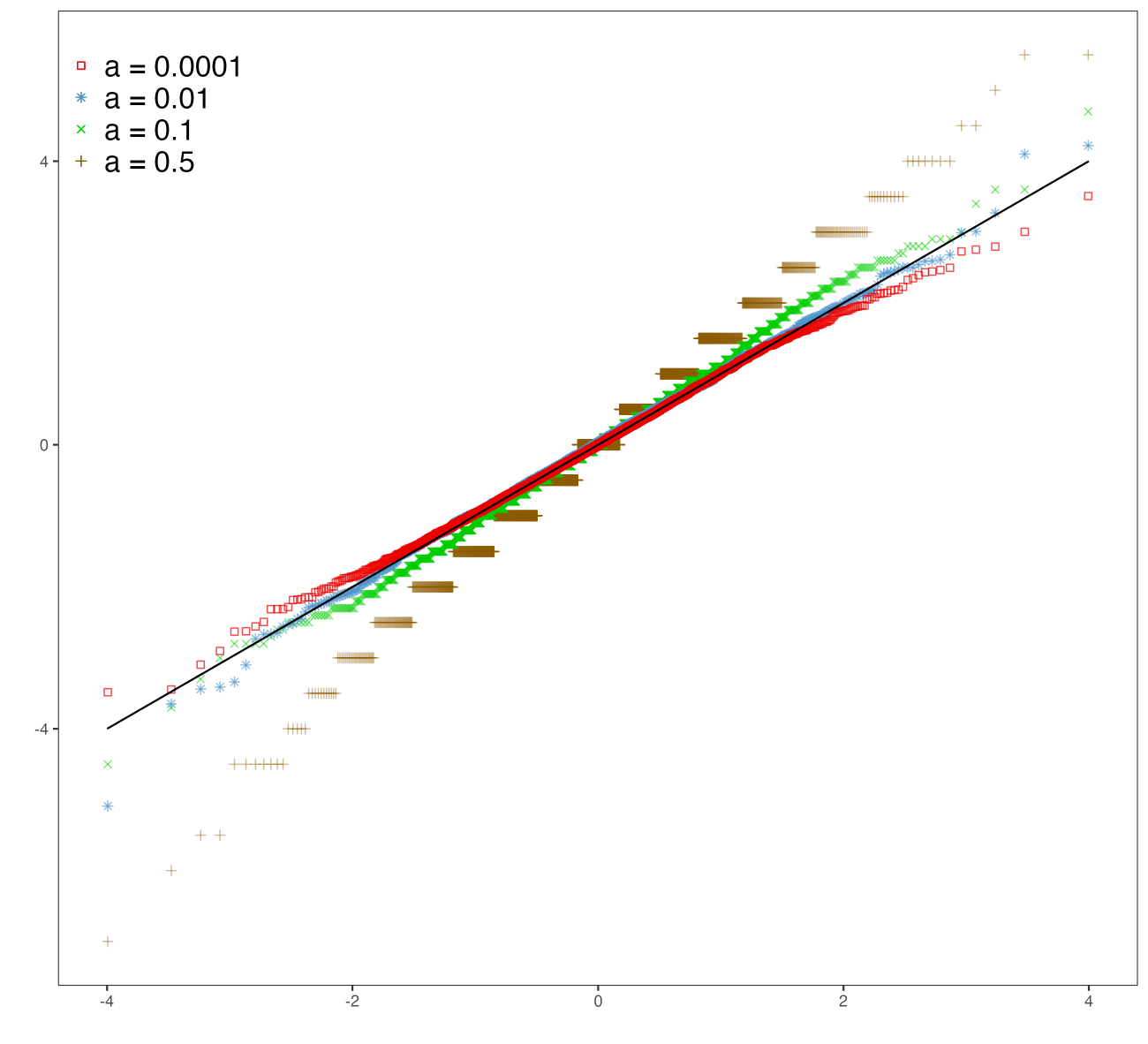}  \\
 \multicolumn{2}{c}{$\alpha = 0.25$}\\
\includegraphics[scale=.22]{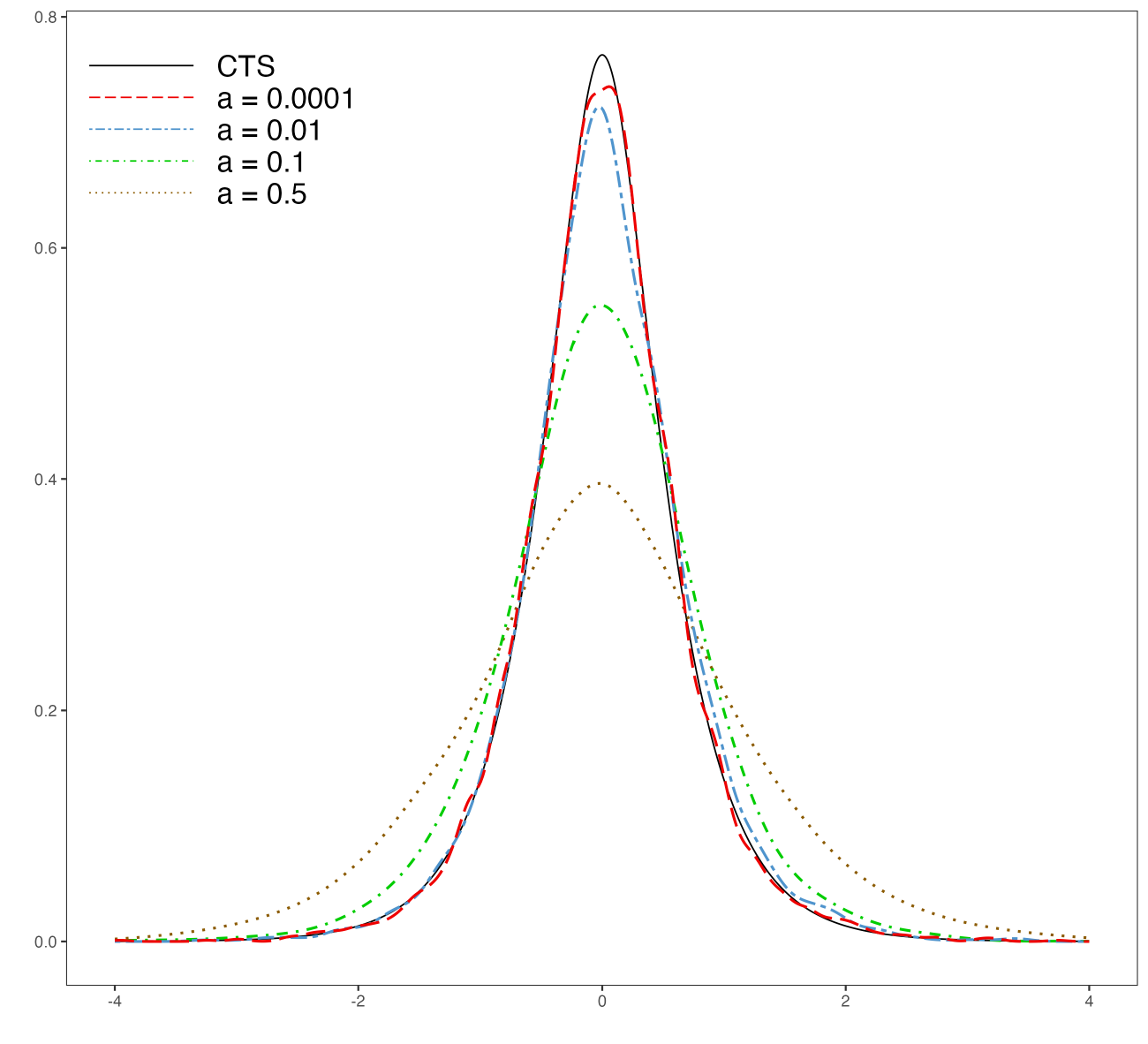} & \includegraphics[scale=.22]{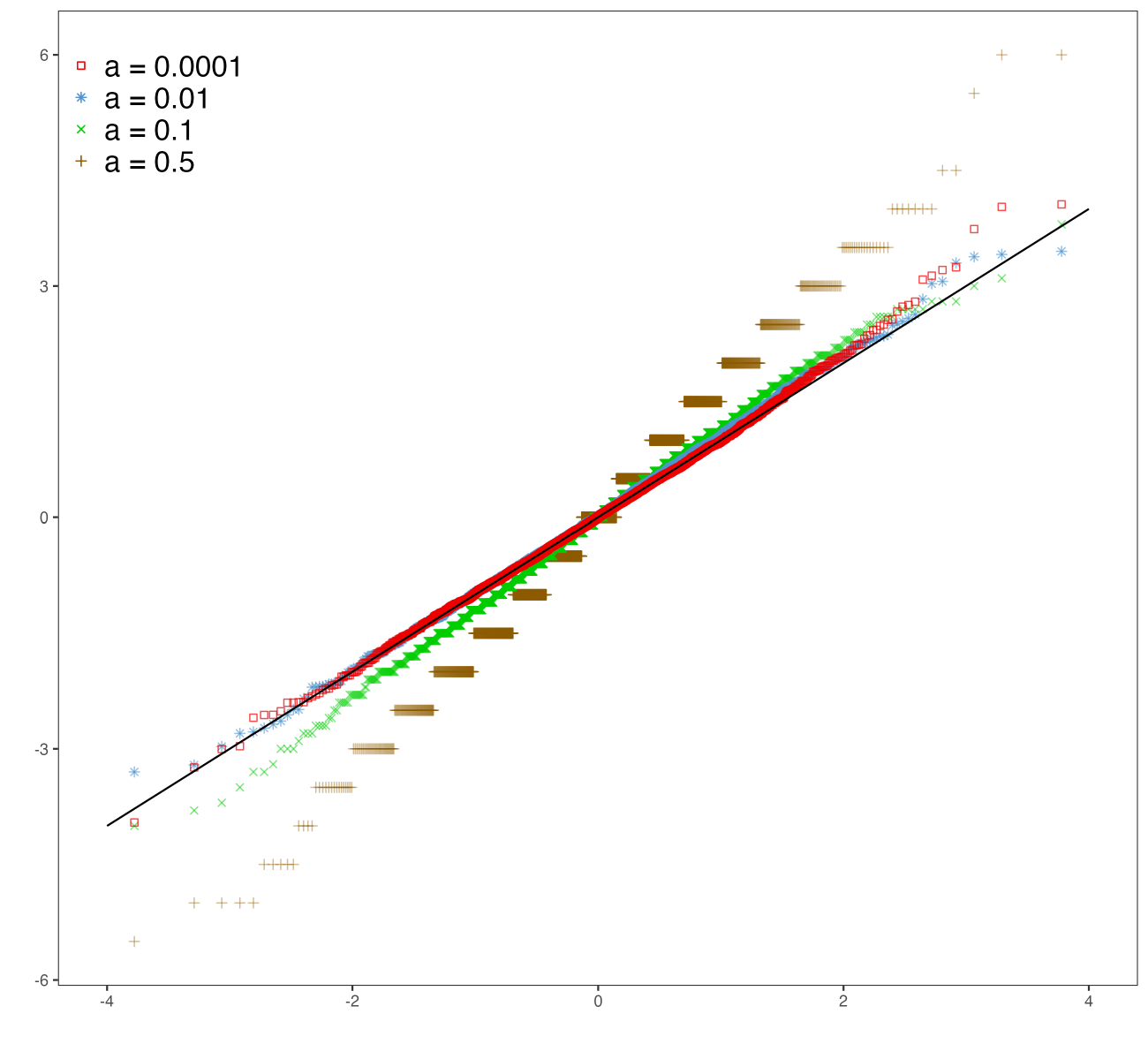}\\
 \multicolumn{2}{c}{$\alpha = 0.50$}\\
\includegraphics[scale=.22]{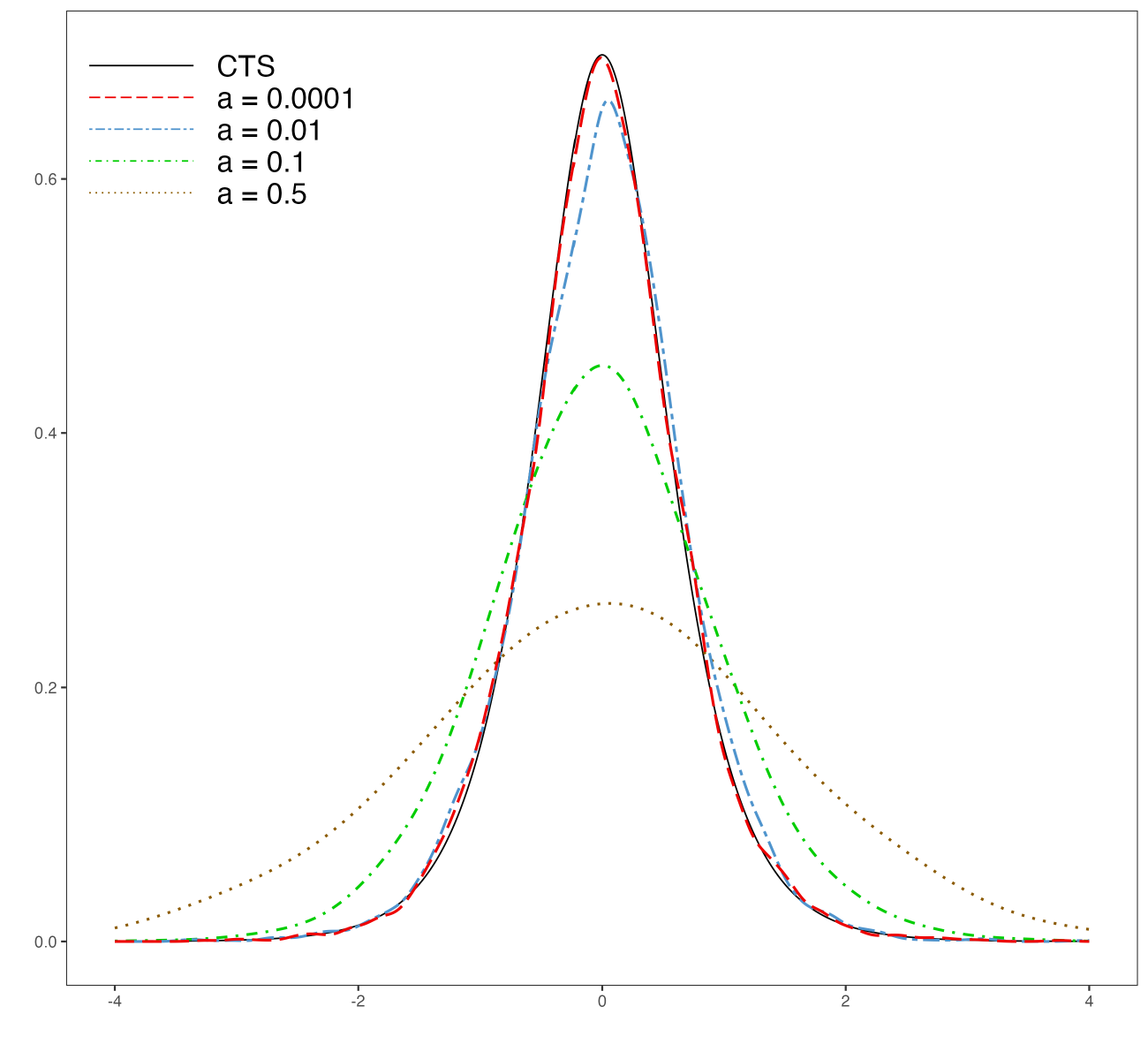} &\includegraphics[scale=.22]{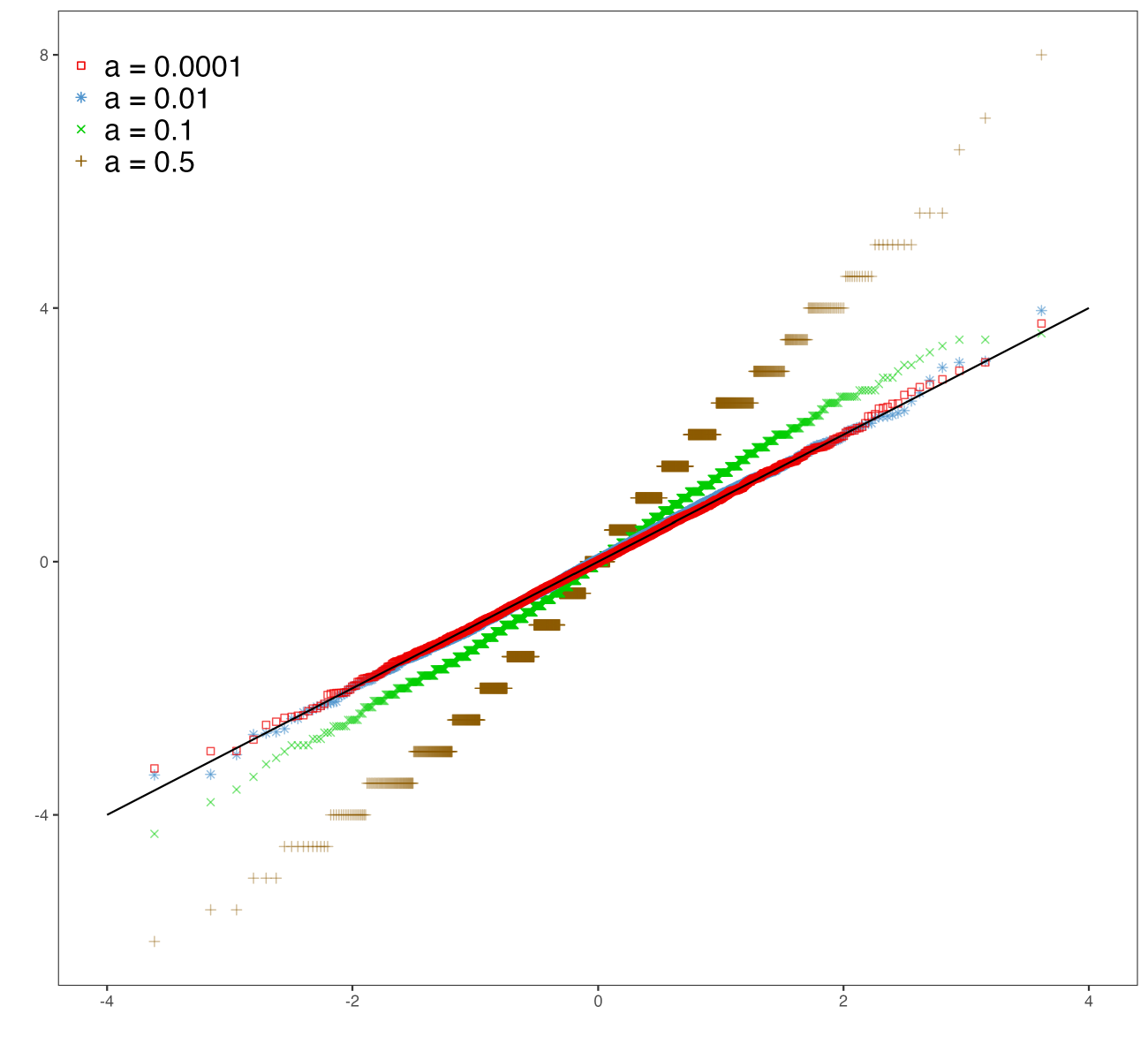} \\
 \multicolumn{2}{c}{$\alpha = 0.75$}
\end{tabular}
\caption{Diagnostic plots for CTS. In the plots on the left, the solid black line is the pdf of the CTS distribution. Overlaid are KDEs based on observations from $\mu_a$ for several choices of $a$. The plots on the right give qq-plots based on samples from $\mu_a$ for several choices of $a$. The solid line is the reference line $y=x$.}
\label{pdf_tweedie}
\end{center}
\end{figure}

\begin{table}
\begin{center}
\begin{tabular}{cc|cc|cc}
$\alpha$ & $a$ & KS, CTS    & CVM, CTS  & KS, PT    & CVM, PT \\ \hline
       & $0.5$     & $0$                 & $0$   & $0$            & $0$\\
$0.25$ & $10^{-1}$ & $\leq 10^{-11}$     & $\leq 10^{-6}$   &  $\leq 10^{-4}$    & $0.0005$\\      
       & $10^{-2}$ & $0.2620$            & $0.4019$ & $0.3797$          & $0.4506$\\
       & $10^{-4}$ & $0.4722$            & $0.4643$ & $0.5142$          & $0.5057$\\ \hline
       & $0.5$     & $0$                 & $0$   & $0$               & $0$\\
$0.50$ & $10^{-1}$ & $\leq 10^{-17}$     & $0$   & $\leq  10^{-5}$   & $\leq 10^{-3}$\\            
       & $10^{-2}$ & $0.2221$            & $0.3773$& $0.3918$          & $0.4664$\\
       & $10^{-4}$ & $0.4644$            & $0.4626$& $0.4805$          & $0.4887$\\ \hline
       & $0.5$     & $0$                 & $0$& $0$               & $0$\\
$0.75$ & $10^{-1}$ & $0$                 & $0$          & $\leq 10^{-11}$   & $\leq 10^{-8}$\\  
       & $10^{-2}$ & $0.0564$            & $0.0800$ & $0.3484$       & $0.4026$\\
       & $10^{-4}$ & $0.5113$            & $0.5125$& $0.4886$       & $0.4853$
\end{tabular} 
\caption{These are p-values for KS and CVM tests for different values of $\alpha$ and $a$ in approximating simulations from CTS and PT distributions. These are averaged over $100$ runs. Under the null hypothesis, the value should be close to $0.5$.}
\label{tests_tweedie}
\end{center}
\end{table}

We now turn to PT distributions. Distribution $\TS(\eta_-,\eta_+,g)$ is PT if
\begin{eqnarray*}
g(x) &=& 1_{[x\le0]} \int_{0}^{\infty} e^{-|x|u} (1+u)^{-\alpha-\ell_--2} u^{\ell_-} \rd u  \\
&&\quad + 1_{[x\ge0]} \int_{0}^{\infty} e^{-xu} (1+u)^{-\alpha-\ell_+-2} u^{\ell_+} \rd u,
\end{eqnarray*}
where $\ell_+,\ell_->0$. It is often convenient to take $\eta_-=c_-(\alpha+\ell_-)(\alpha+\ell_-+1)$ and $\eta_+=c_+(\alpha+\ell_+)(\alpha+\ell_++1)$, where $c_-,c_+\ge0$ with $c_-+c_+>0$ are parameters. In this case, we denote this distribution by PT$_\alpha(c_-,\ell_-,c_+,\ell_+)$. These distributions have finite means, but heavier tails than CTS distribution. If $Y\sim$PT$_\alpha(c_-,\ell_-,c_+,\ell_+)$ with $c_-,c_+>0$, then
$$
\rE[|Y|^\beta]<\infty \mbox{ if and only if } \beta<1+\alpha+\min\{\ell_-,\ell_+\}.
$$
For more on these distributions, see \cite{Grabchak:2016} or \cite{Grabchak:2021b}.

Our methodology for approximate simulation from $\mu=$PT$_\alpha(c_-,\ell_-,c_+,\ell_+)$ is as follows. First choose $a>0$ small, then independently simulate $Y_{a,+}\sim\mathrm{PM}(M^+,a)$ and $Y_{a,-}\sim\mathrm{PM}(M^-,a)$, where $M^-(\rd x) = \eta_- g(-x) x^{-1-\alpha}1_{[x>0]}\rd x$ and $M^+(\rd x) =  \eta_+ g(x)x^{-1-\alpha} 1_{[x>0]}\rd x$. The random variable $Y_a = Y_{a,+}-Y_{a,-}$ gives an approximate simulation from $\mu$ and we denote the distribution of $Y_a$ by $\mu_a$. We can simulate $Y_{a,+}$ and $Y_{a,-}$ using Algorithm 1 or Algorithm 2. In this paper, we use Algorithm 2, where we simulate from $M_1^{(a)}$ using Algorithm 3 and from $M^{(a)}_2$ using Algorithm 4. Note that we cannot use Algorithm 5 since \eqref{eq: cond on q} does not hold in this case.

We now illustrate the performance of our methodology for PT distributions. We again focus on the symmetric case. In this study we choose $\alpha \in \{0.25, 0.5, 0.75\}$, $c_-=c_+ = 1$, and $\ell_-=\ell_+ = 1$. We take $a\in \{0.5, 0.1, 0.01, 0.0001\}$ to demonstrate how $\mu_a$ converges to $\mu$. The results of our simulations are given in Figure \ref{pdf_power} and Table \ref{tests_tweedie}.

Again, for each choice of the parameters and each value of $a$, we simulate $5000$ observations. In Figure \ref{pdf_power} we give two types of diagnostic plots. First, we compare the KDE of our observations from $\mu_a$ with the density of $\mu$. We can see that, as $a$ decreases, the KDE approaches the density of $\mu$. Second, we present qq-plots and again we see that, as $a$ decreases, the qq-plots approach the diagonal line. 

Next, we perform formal goodness-of-fit testing. For each choice the parameters and of $a$, we simulate $100$ samples each comprised of $5000$ observations. For each sample we evaluate the $p$-values of the KS and the CVM tests. We then average these together over the $100$ samples. Again, when averaging over many runs, we expect the average of the $p$-values to be close to $0.5$ under the null hypothesis. The results are given in Table \ref{tests_tweedie}. We can see that the means of the $p$-values get closer to $0.5$ as $a$ decreases. Taken together, the goodness-of-fit results given in  Figure \ref{pdf_power} and Table \ref{tests_tweedie} suggest that the approximation works well for $a=10^{-4}$.

\begin{figure}
\begin{center}
\begin{tabular}{cc}
\includegraphics[scale=.22]{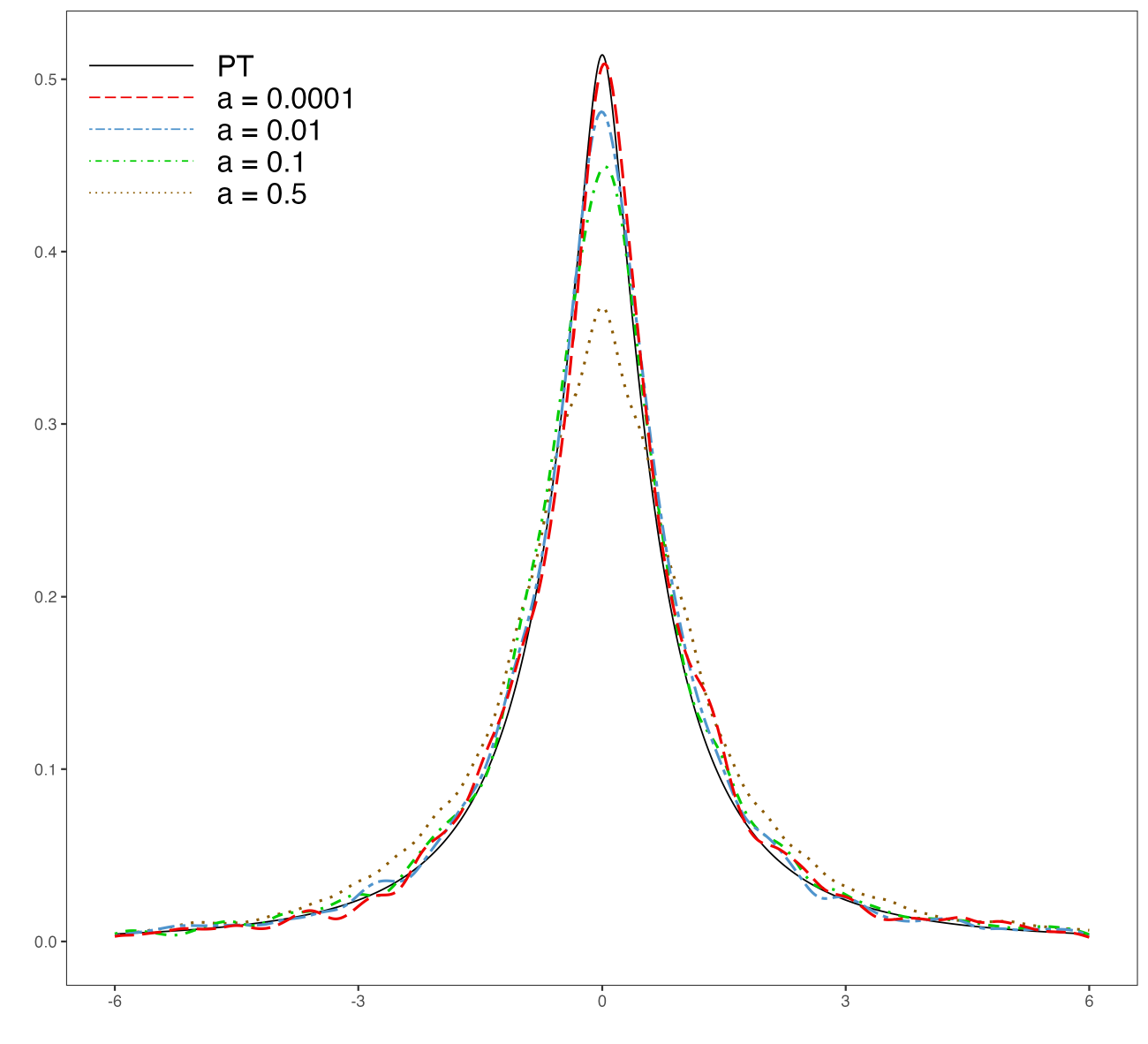} & \includegraphics[scale=.22]{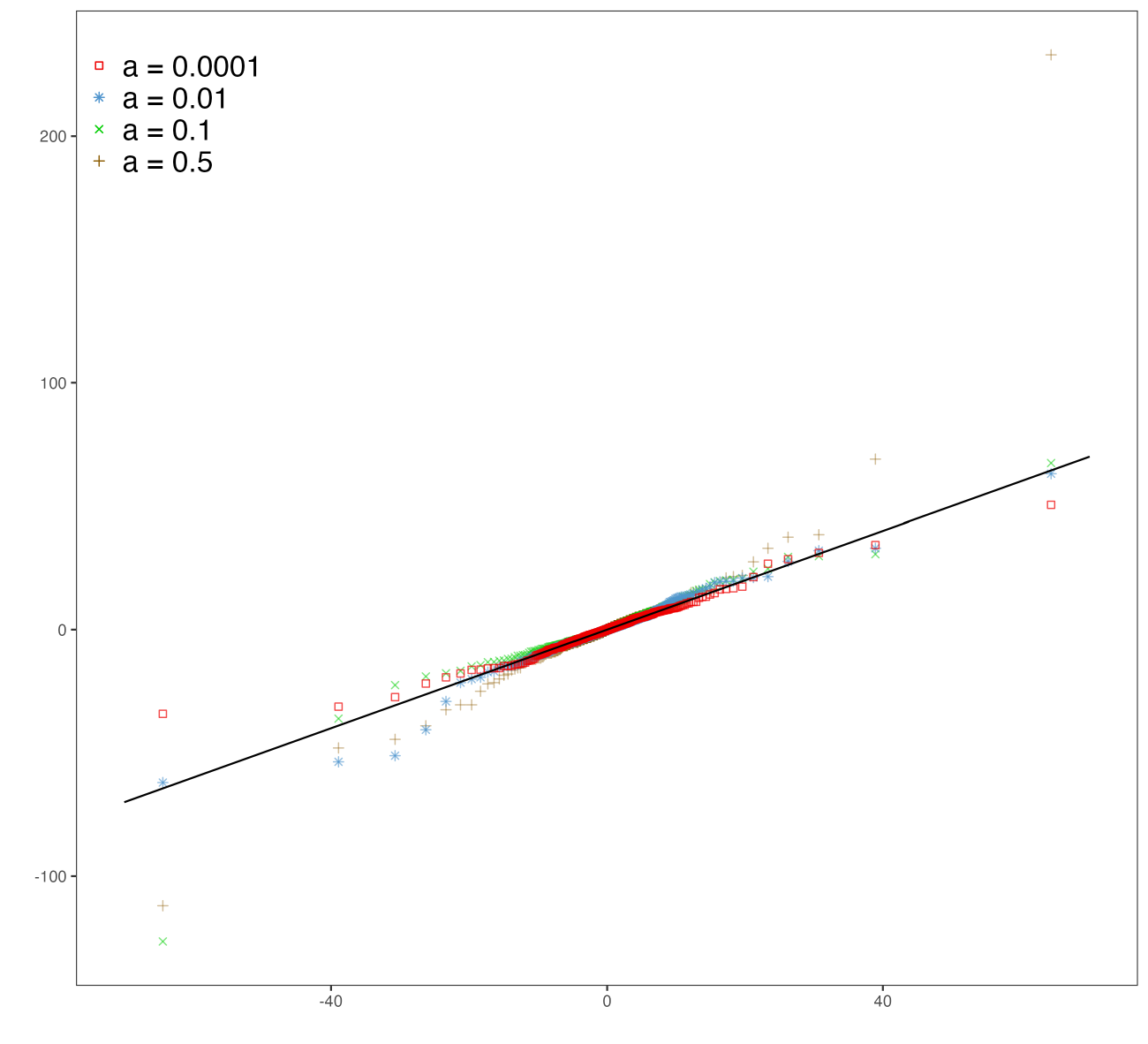} \\
 \multicolumn{2}{c}{$\alpha = 0.25$}\\
\includegraphics[scale=.22]{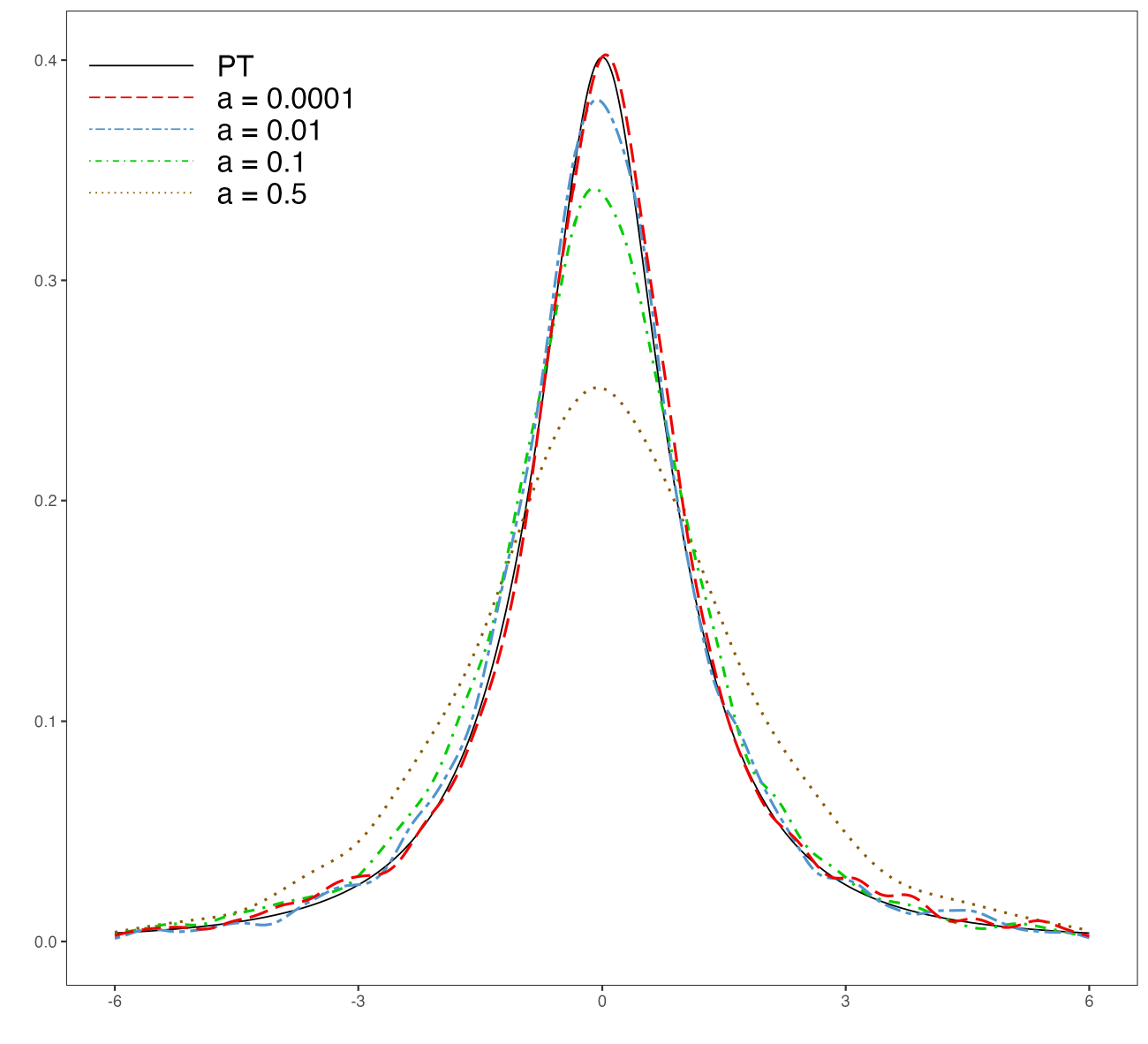} & \includegraphics[scale=.22]{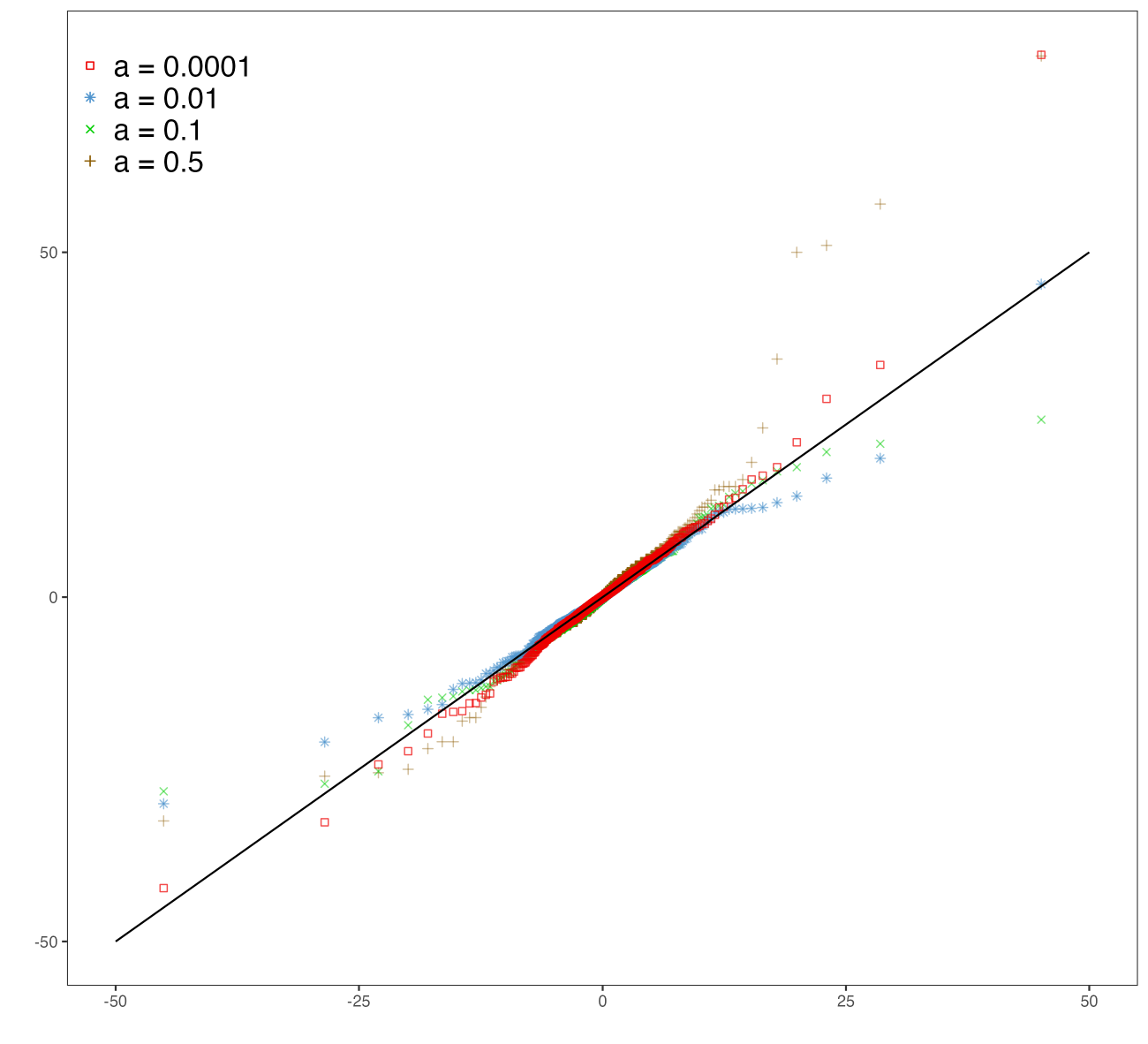} \\
 \multicolumn{2}{c}{$\alpha = 0.50$}\\
\includegraphics[scale=.22]{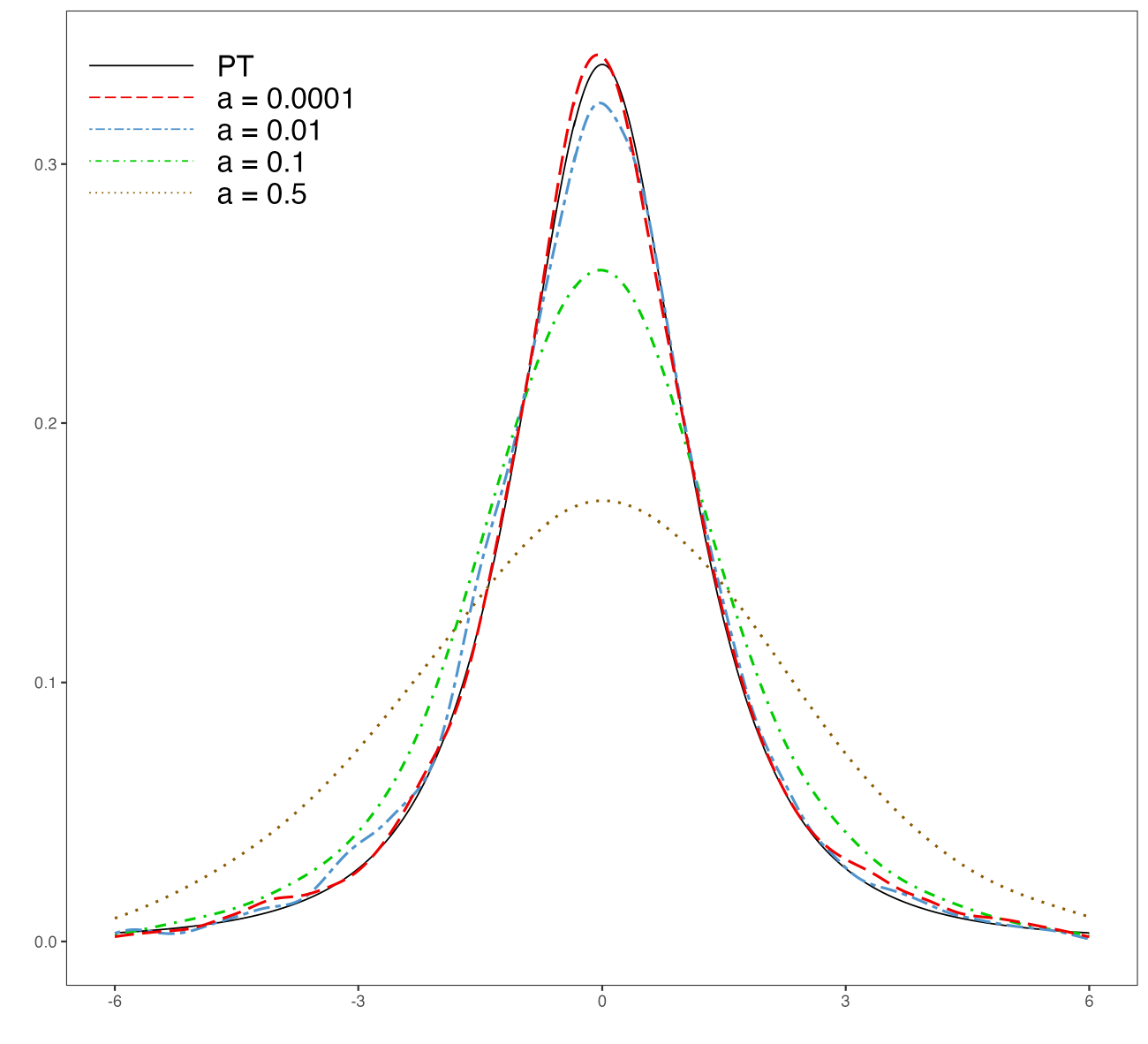} & \includegraphics[scale=.22]{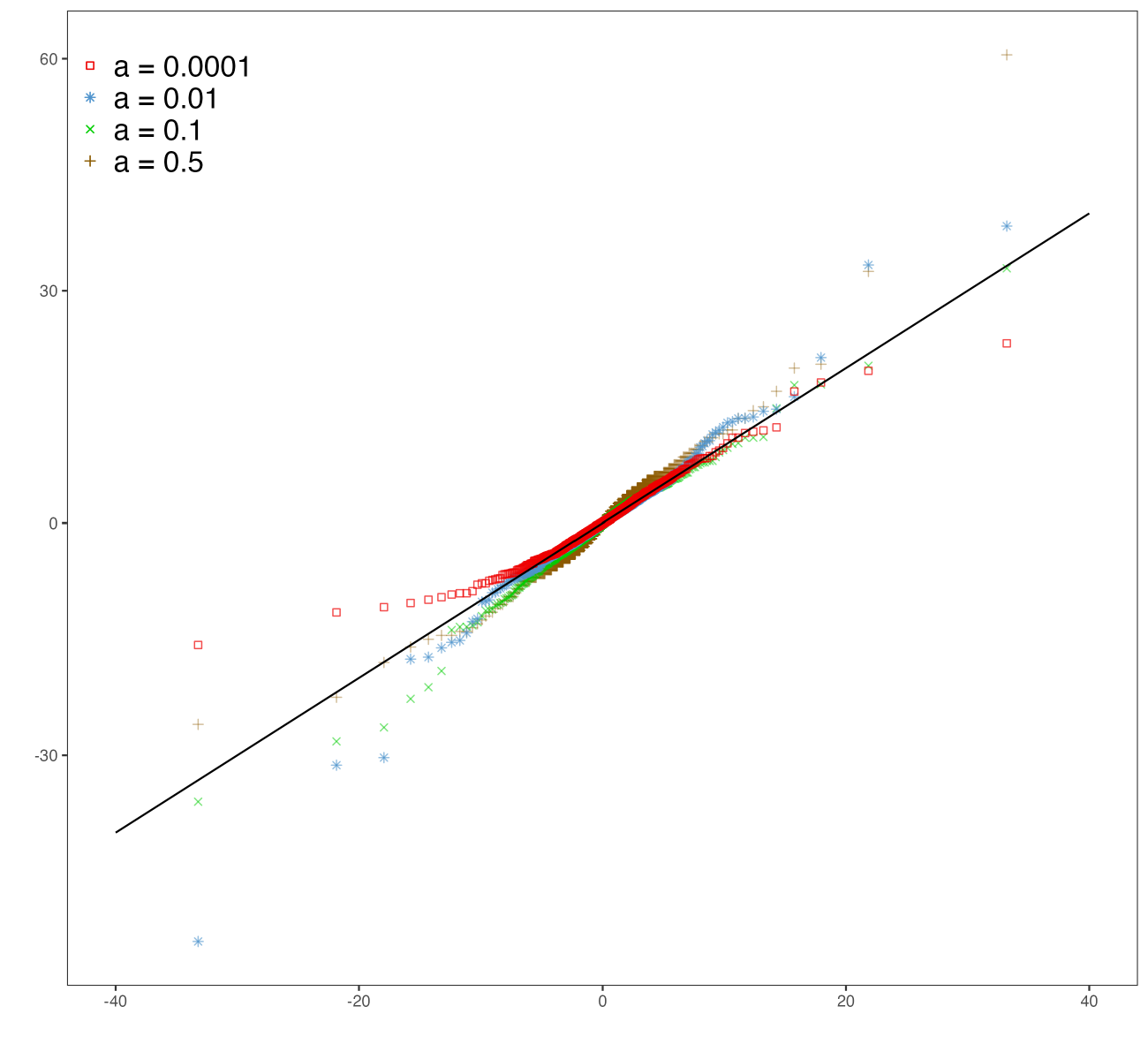} \\
 \multicolumn{2}{c}{$\alpha = 0.75$}
\end{tabular}
\caption{Diagnostic plots for PT. In the plots on the left, the solid black line is the pdf of the PT distribution. Overlaid are KDEs based on observations from $\mu_a$ for several choices of $a$. The plots on the right give qq-plots based on samples from $\mu_a$ for several choices of $a$. The solid line is the reference line $y=x$.}
\label{pdf_power}
\end{center}
\end{figure}

\section{Proofs}\label{sec: proofs}

In this section we prove out theoretical results. We begin with several lemmas.

\begin{lemma}\label{lemma: bounds of exp}
Fix $z\in\mathbb C$.\\
1. We have $\left|e^{z}-1\right| \le |z|e^{|z|}$.\\
2. If $\Re z\le0$, then $\left|e^{z}-1\right|\le 2\wedge|z|$.\\
3. If $\Im z=0$, then for any $n\in\{0,1,2,\dots\}$
$$
\left|e^{iz}-\sum_{k=0}^n \frac{(iz)^k}{k!}\right|\le \min\left\{ \frac{|z|^{n+1}}{(n+1)!}, \frac{2|z|^n}{n!}\right\}.
$$
4. If $z_1,z_2\in\mathbb C$ with $\Re z_1,\Re z_2\le 0$, then
$$
\left|e^{z_1}-e^{z_2}\right| \le 2\wedge |z_1-z_2|.
$$
\end{lemma}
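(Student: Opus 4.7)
My plan is to handle the four parts separately, in each case leveraging either the Taylor series of $e^z$ or an integral representation, combined with the hypothesis on $\Re z$.

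For Part 1, the natural approach is to expand $e^z-1 = \sum_{k\ge 1} z^k/k!$, apply the triangle inequality term by term, factor out a $|z|$, and observe that $\sum_{k\ge 0}|z|^k/(k+1)!$ is majorized by $e^{|z|}$ since $(k+1)!\ge k!$. For Part 4, the bound $|e^{z_1}-e^{z_2}|\le 2$ is immediate from $|e^{z_i}|=e^{\Re z_i}\le 1$ and the triangle inequality. For the bound by $|z_1-z_2|$, I would use the integral representation $e^{z_1}-e^{z_2}=(z_1-z_2)\int_0^1 e^{z_2+t(z_1-z_2)}\,dt$ and note that the real part of the exponent is a convex combination of $\Re z_1$ and $\Re z_2$, hence nonpositive, so the integrand has modulus at most $1$. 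Part 2 is the special case $z_1=z$, $z_2=0$ of Part 4, so it requires no extra work; alternatively it can be derived directly in the same style via $e^z-1 = z\int_0^1 e^{tz}\,dt$.

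For Part 3, the key tool is the integral form of the Taylor remainder: for $z\in\mathbb{R}$ and $n\ge 0$,
\[
e^{iz}-\sum_{k=0}^n \frac{(iz)^k}{k!} \;=\; \frac{i^{n+1}}{n!}\int_0^z (z-s)^n e^{is}\,ds,
\]
which, upon majorizing $|e^{is}|=1$, delivers the first bound $|z|^{n+1}/(n+1)!$ after a simple change of variables. For the second bound $2|z|^n/n!$, I would use a telescoping trick: letting $R_n$ denote the remainder on the left-hand side, the identity $R_n=R_{n-1}-(iz)^n/n!$ combined with the first bound applied at level $n-1$ yields $|R_n|\le |z|^n/n!+|z|^n/n!=2|z|^n/n!$. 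The edge case $n=0$ is handled directly, since $|e^{iz}-1|\le |e^{iz}|+1=2$.

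I expect the main obstacle, modest as it is, to lie in Part 3: one has to recognize that the $2|z|^n/n!$ bound cannot be produced by any further estimation of the integral remainder at order $n$ itself, but rather requires looking back to the remainder at order $n-1$ and absorbing the missing term via the triangle inequality. Once this observation is made, the rest is routine bookkeeping.
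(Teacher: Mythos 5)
Your proposal is correct, and its toolkit (series expansion for part 1, integral representations and the bound $|e^{w}|=e^{\Re w}\le 1$, triangle inequality) is essentially the paper's. The only differences are organizational. First, for part 3 the paper simply cites Section 26 of Billingsley, whereas you supply the standard argument: the integral form of the Taylor remainder gives $|z|^{n+1}/(n+1)!$, and the bound $2|z|^{n}/n!$ follows from the recursion $R_n=R_{n-1}-(iz)^n/n!$ together with the first bound at level $n-1$ (with $n=0$ handled by $|e^{iz}-1|\le 2$); this is in substance the cited proof, just written out. Second, the logical order of parts 2 and 4 is reversed: the paper proves part 2 directly via $e^{z}-1=z\int_0^1 e^{tz}\,\rd t$ and then deduces part 4 by assuming without loss of generality $\Re z_1\le\Re z_2\le 0$ and writing $|e^{z_1}-e^{z_2}|\le|e^{z_1-z_2}-1|$, while you prove part 4 directly from $e^{z_1}-e^{z_2}=(z_1-z_2)\int_0^1 e^{z_2+t(z_1-z_2)}\,\rd t$ (the exponent's real part being a convex combination of $\Re z_1,\Re z_2$, hence nonpositive) and obtain part 2 as the case $z_2=0$. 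Your route for part 4 has the small advantage of avoiding the WLOG reduction; otherwise the two arguments are interchangeable, and there is no gap.
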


\begin{proof}
We have
\begin{eqnarray*}
\left|e^{z}-1\right|  &\le& \sum_{n=1}^\infty \frac{|z|^n}{n!} 
= |z|\sum_{n=0}^\infty \frac{|z|^{n}}{(n+1)! }\le |z|\sum_{n=0}^\infty \frac{|z|^{n}}{n! }=|z|e^{|z|}.
\end{eqnarray*}
Next, let $z=-a+ib$, where $a=-\Re z$ and $b=\Im z$ and assume that $-a=\Re z\le0$. We have
$$
\left|e^{z}-1\right| = \left|z\int_0^1 e^{tz}\rd t\right|\le |z| \int_0^1 e^{-at}\rd t \le |z|
$$
and, by the triangle inequality, 
\begin{eqnarray*}
\left|e^{z}-1\right|  \le  |e^{-a}|  |e^{ib}| + 1 \le 2.
\end{eqnarray*}
The third part can be found in Section 26 of \cite{Billingsley:1995}. We now turn to the fourth part. Without loss of generality assume that $\Re z_1\le\Re z_2\le 0$ and note that
$$
\left|e^{z_1}-e^{z_2}\right| \le \left|1-e^{z_1-z_2}\right|  \le 2\wedge |z_1-z_2|,
$$
where the final inequality follows by the second part.
\end{proof}

\begin{lemma}\label{lemma: bounds on c nu}
For any $s\in\mathbb R$ we have
$$
\left|C_\nu(s) \right|\le \zeta_1|s| \mbox{ and }
\left|C_\nu(s) -is\gamma\right|\le \frac{1}{2}s^2 \zeta_2.
$$
\end{lemma}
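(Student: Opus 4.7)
The plan is to apply Lemma \ref{lemma: bounds of exp} directly to the integrand of $C_\nu$. Recall that
$$
C_\nu(s)=\int_{-\infty}^\infty \left(e^{izs}-1\right) L(\rd z),
$$
and since $\gamma=\int z\,L(\rd z)$ is finite (it is dominated by $\zeta_1$), we may also write
$$
C_\nu(s)-is\gamma=\int_{-\infty}^\infty \left(e^{izs}-1-izs\right) L(\rd z).
$$
Both bounds then reduce to pointwise estimates on the respective integrands, which are exactly the kind of estimates supplied by Lemma \ref{lemma: bounds of exp}.

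For the first bound, I would apply part 2 of Lemma \ref{lemma: bounds of exp} with the choice $z\mapsto izs$. Since $zs\in\mathbb R$, we have $\Re(izs)=0\le 0$, so $|e^{izs}-1|\le|izs|=|z||s|$. Integrating against $L$ yields $|C_\nu(s)|\le |s|\int|z|L(\rd z)=\zeta_1|s|$.

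For the second bound, I would apply part 3 of Lemma \ref{lemma: bounds of exp} with $n=1$ and the real argument $zs$, giving $|e^{izs}-1-izs|\le \tfrac{1}{2}(zs)^2=\tfrac{1}{2}z^2 s^2$. Integrating against $L$ then gives $|C_\nu(s)-is\gamma|\le \tfrac{1}{2}s^2\int z^2 L(\rd z)=\tfrac{1}{2}s^2\zeta_2$.

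There is no real obstacle here beyond the bookkeeping: the only subtlety is checking that the $n=1$ bound in Lemma \ref{lemma: bounds of exp} may be applied (which requires the argument of $e^{i\cdot}$ to be real, and it is, since both $z$ and $s$ are real), and that the integration against $L$ is legitimate, which follows from the assumed finiteness of $\zeta_1$ and $\zeta_2$. The whole argument is a couple of lines once Lemma \ref{lemma: bounds of exp} is in hand.
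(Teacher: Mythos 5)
Your proposal is correct and matches the paper's proof: both bounds are obtained by moving the absolute value inside the integral and applying the elementary exponential estimates of Lemma \ref{lemma: bounds of exp} (the $|e^{z}-1|\le|z|$ bound for $\Re z\le 0$, and the second-order Taylor bound for real arguments), then integrating against $L$. No differences worth noting.
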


\begin{proof}
By Lemma \ref{lemma: bounds of exp}
$$
\left|C_\nu(s) \right|\le \int_{-\infty}^\infty\left| e^{isz}-1\right|L(\rd z)\le|s| \int_{-\infty}^\infty\left| z\right|L(\rd z) = |s|\zeta_1.
$$
Next, from the definition of $C_\nu$ and $\gamma$, we have
$$
\left|C_\nu(s) -is\gamma\right|\le \int_{-\infty}^\infty \left| e^{isz}-1-isz\right|L(\rd z)\le \frac{1}{2}s^2 \int_{-\infty}^\infty z^2L(\rd z),
$$
where the final inequality follows by Lemma \ref{lemma: bounds of exp}.
\end{proof}

\begin{proof}[Proof of Proposition \ref{prop: conv}]
Our proof is based on showing the convergence of characteristic functions. By l'H\^opital's rule
$$
\lim_{a\downarrow0} \frac{|x|}{a\gamma} (e^{isazx/|x|}-1) = |x| \lim_{a\downarrow0} \frac{\cos(sazx/|x|)-1+i\sin(sazx/|x|)}{a\gamma}  = iszx/\gamma.
$$
By Lemma \ref{lemma: bounds of exp}
\begin{eqnarray*}
 \int_{-\infty}^\infty \frac{|x|}{a\gamma} \left|e^{isazx/|x|}-1\right|L(\rd z) \le |x||s|\int_{-\infty}^\infty |z|L(\rd z)/\gamma<\infty.
\end{eqnarray*}
Thus, by dominated convergence
\begin{eqnarray*}
\lim_{a\downarrow0} \frac{|x|}{a\gamma} C_\nu(sax/|x|) &=& \lim_{a\downarrow0} \int_{-\infty}^\infty \frac{|x|}{a\gamma} \left(e^{isazx/|x|}-1\right)L(\rd z) \\
&=&  ixs\int_{-\infty}^\infty zL(\rd z) /\gamma=ixs.
\end{eqnarray*}
From here
$$
\lim_{a\downarrow0}\hat\mu_a(z) = \hat\mu(z), \ \ \ z\in\mathbb R
$$
by another application of dominated convergence. We just note that by Lemmas \ref{lemma: bounds of exp} and \ref{lemma: bounds on c nu}
\begin{eqnarray*}
\left| e^{\frac{|x|}{a\gamma} C_\nu(sax/|x|) }-1\right| &\le& 2\wedge \frac{|x|}{a\gamma} \left|C_\nu(sax/|x|)\right| \\
&\le& 2\wedge |x||s|\zeta_1/\gamma\\
&\le& \left(2+|s|\zeta_1/\gamma\right) (1\wedge |x|).
\end{eqnarray*}
From here \eqref{eq: levy meas} guarantees that dominated convergence holds.
\end{proof}

We now recall several versions of Esseen’s smoothing lemma, see \cite{Bobkov:2016} for a survey of such results.

\begin{lemma}\label{lemma: Esseen}
For any $T>0$, we have
\begin{eqnarray}\label{eq: Esseen inf}
\left\| F_{a} - F \right\|_\infty \le \frac{1}{\pi}\int_{-T}^T \left|\frac{\hat\mu_{a}(s) - \hat\mu(s)}{s} \right|\rd s+ \frac{12 r_0}{\pi^2 T}
\end{eqnarray}
and for any $T>0$ and any $p\in[1,\infty)$
\begin{eqnarray}\label{eq: Esseen one}
\left\| F_{a} - F \right\|_p^p  &\le& \left\| F_{a} - F \right\|_1 \le \left(\int_{-T}^T\left|\frac{\hat\mu_{a}(s) - \hat\mu(s)}{s}\right|^2\rd s\right)^{1/2} \nonumber\\
&&\quad+ \left(\int_{-T}^T \left|\frac{\hat\mu'_{a}(s) - \hat\mu'(s)}{s}\right|^2\rd s\right)^{1/2}  \nonumber\\
&&\quad+  \left(\int_{-T}^T \left|\frac{\hat\mu_{a}(s) - \hat\mu(s)}{s^2}\right|^2\rd s\right)^{1/2} +\frac{4\pi}{T}.
\end{eqnarray}
Further, for any $p\in[2,\infty)$, if $q = p/(p-1)$, then for any $T>0$
\begin{eqnarray}\label{eq: Esseen p}
\left\| F_{a} - F \right\|_p \le\frac{1}{2}\left(\int_{-T}^T\left|\frac{\hat\mu_{a}(s) - \hat\mu(s)}{s}\right|^q\rd s\right)^{1/q} + \frac{4(p-1)}{T^{1/p}}.
\end{eqnarray}
\end{lemma}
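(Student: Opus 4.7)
The three inequalities are classical Fourier-analytic smoothing bounds, and the authors explicitly introduce them as a recollection from \cite{Bobkov:2016}. My approach is therefore to derive each bound from a unified Fourier framework and to check that the constants match those in the survey. Throughout, set $g = F - F_a$; since $\hat\mu(0) = \hat\mu_a(0) = 1$, the function $\hat g(s) := (\hat\mu_a(s) - \hat\mu(s))/(is)$ is regular at the origin, vanishes at infinity, and coincides with the Fourier transform of $g$. For \eqref{eq: Esseen inf}, I would invoke Esseen's classical smoothing lemma: convolve $g$ with a non-negative kernel $K_T$ whose characteristic function is supported in $[-T, T]$ (a scaled Fej\'er kernel suffices), split $g = g * K_T + (g - g * K_T)$, bound the first piece by Fourier inversion applied to $\hat g$, and bound the smoothing error by $\sup |f|$ times a first moment of $K_T$. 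The hypothesis $r_0 < \infty$ supplies $\sup |f| \le r_0/(2\pi)$ via inversion of the density, and an optimal kernel choice delivers the prefactor $12/\pi^2$ on the tail term.

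For \eqref{eq: Esseen one}, I would bound $\|g\|_1$ by splitting the time line at a threshold $R \propto T$ and applying Cauchy--Schwarz in the forms $\|g \mathbf{1}_{|x| \le R}\|_1 \le \sqrt{2R}\, \|g\|_2$ and $\|g \mathbf{1}_{|x| > R}\|_1 \le R^{-1/2} \|x g\|_2$. Plancherel identifies $\|g\|_2^2 = (2\pi)^{-1} \int |\hat g|^2\, ds$ and $\|x g\|_2^2 = (2\pi)^{-1} \int |\hat g'|^2\, ds$, and applying the quotient rule to $\hat g'(s) = \frac{d}{ds}[(\hat\mu_a - \hat\mu)/(is)]$ splits $|\hat g'|^2$ into contributions involving $(\hat\mu_a' - \hat\mu')/s$ and $(\hat\mu_a - \hat\mu)/s^2$, which are exactly the last two integrals in \eqref{eq: Esseen one}. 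The residual $4\pi/T$ comes from truncating the Fourier integrals at $|s| = T$, using $|\hat g| \le 2/|s|$.

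For \eqref{eq: Esseen p}, the Hausdorff--Young inequality $\|g\|_p \le C \|\hat g\|_q$ applies with $q = p/(p-1)$; splitting $\|\hat g\|_q^q$ at $|s| = T$ gives the displayed inner integral (with the prefactor $1/2$ absorbing the Hausdorff--Young constant in the paper's normalization), while the tail, bounded via $|\hat g| \le 2/|s|$, contributes $\int_{|s| > T} (2/|s|)^q\, ds \sim T^{1-q}/(q-1)$, which becomes $4(p-1)/T^{1/p}$ after the $q$-th root. The main obstacle here is not the strategy but the constant bookkeeping---matching $12/\pi^2$, $4\pi$, and $4(p-1)$ to Bobkov's formulations in his conventions, and confirming that $\hat g$ is differentiable with $\hat g' \in L^2$, which follows from our standing assumption $m_1 < \infty$ (ensuring smoothness of $\hat\mu$ at the origin via the standard Lévy measure bounds).
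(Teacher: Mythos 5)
Your proposal is correct and follows essentially the same route as the paper: the paper simply cites the classical Esseen smoothing lemma (Feller; Ibragimov--Linnik) for the $L^\infty$ bound, Bobkov's Corollary 8.3 together with the quotient rule and Minkowski's inequality for the $L^1$ bound, and Bobkov's Corollary 7.2 (Hausdorff--Young) for the $L^p$ bound, which are exactly the tools you sketch, with the same use of $\esssup f\le r_0/(2\pi)$ and the same quotient-rule splitting into the three displayed integrals. The only difference is that you outline the standard proofs of these cited results rather than invoking them, which is a matter of detail, not of approach.
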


\begin{proof}
Here, \eqref{eq: Esseen inf} is a version of the classical Esseen’s smoothing lemma, see, e.g., page 538 in \cite{Feller:1971} or Theorem 1.5.2 in \cite{Ibragimov:Linnik:1971}. We just use the fact that the essential supremum of the pdf of $\mu$ is upper bounded by $r_0/(2\pi)$. The formula in \eqref{eq: Esseen one} is essentially Corollary 8.3 in \cite{Bobkov:2016}, which it itself  a version of Theorem 1.5.4 in \cite{Ibragimov:Linnik:1971}. The constant $4\pi$ is given in \cite{Ibragimov:Linnik:1971}. To get the bound in \eqref{eq: Esseen one}, we apply the quotient rule and Minkowski's inequality to the bound in \cite{Bobkov:2016}. The formula in \eqref{eq: Esseen p} is given in Corollary 7.2 of \cite{Bobkov:2016}. It is a simple application of the classical Hausdorff-Young inequality, as given in, e.g., Proposition 2.2.16 of \cite{Grafakos:2004}. 
\end{proof}

\begin{lemma}\label{lemma: char func bound}
1. For any $s\in\mathbb R$, we have
\begin{eqnarray*}
\left|\hat\mu_{a}(s) - \hat\mu(s)\right|\le a |\hat\mu(s)|  s^2 e^{0.5 as^2 m_1\zeta_2/\gamma} m_1\zeta_2\frac{1}{2\gamma}
\end{eqnarray*}
and
\begin{eqnarray*}
\left|\hat\mu'_{a}(s) - \hat\mu'(s)\right|\le a|\hat\mu(s)| \zeta_2  \left(\frac{\zeta_1}{2\gamma^2} s^2  m_2 +  \frac{\left|s \right|}{\gamma} m_1 + s^2 e^{0.5 as^2 m_1 \zeta_2/\gamma} m_1^2 \frac{\zeta_1}{2\gamma^2}\right).
\end{eqnarray*}
2. For any $a>0$, $T>0$, and $q\ge1$, we have
\begin{eqnarray}\label{eq: bound on integs inf}
\left(\int_{-T}^T\left|\frac{\hat\mu_{a}(s) - \hat\mu(s)}{s}\right|^q\rd s\right)^{1/q} \le a T e^{0.5aT^2 m_1\zeta_2/\gamma} m_1\frac{\zeta_2}{2\gamma}  r_0^{1/q},
\end{eqnarray}
\begin{eqnarray*}\label{eq: bound on integs q}
\left(\int_{-T}^T\left|\frac{\hat\mu_{a}(s) - \hat\mu(s)}{s^2}\right|^q\rd s\right)^{1/q} \le a e^{0.5aT^2 m_1\zeta_2/\gamma} m_1\frac{\zeta_2}{2\gamma}  r_0^{1/q},
\end{eqnarray*}
and %, when $m_2<\infty$,
\begin{eqnarray*}\label{eq: bound on integs one}
\left(\int_{-T}^T\left|\frac{\hat\mu'_{a}(s) - \hat\mu'(s)}{s}\right|^2\rd s\right)^{1/2} \le a  \frac{\zeta_2 }{\gamma} \left(\frac{\zeta_1}{2\gamma} T m_2 +  m_1 + T e^{0.5aT^2 m_1 \zeta_2/\gamma} m_1^2\frac{\zeta_1}{2\gamma}\right) r_0^{1/2}.
\end{eqnarray*}
\end{lemma}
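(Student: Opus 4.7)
\bigskip

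\noindent\textbf{Proof plan.} Write $A(s)=\int_{-\infty}^{\infty}(e^{ixs}-1)\,M(\rd x)$ and
$B_a(s)=\int_{-\infty}^{\infty}\bigl(e^{\frac{|x|}{a\gamma}C_\nu(sax/|x|)}-1\bigr)\,M(\rd x)$
so that $\hat\mu(s)=e^{A(s)}$ and $\hat\mu_a(s)=e^{B_a(s)}$. For Part 1, first factor
$\hat\mu_a(s)-\hat\mu(s)=\hat\mu(s)\bigl(e^{B_a(s)-A(s)}-1\bigr)$. With $\tilde s:=sa x/|x|$,
$$
B_a(s)-A(s)=\int_{-\infty}^{\infty}\bigl(e^{w_2(x)}-e^{w_1(x)}\bigr)M(\rd x),\quad w_1=ixs,\ w_2=\tfrac{|x|}{a\gamma}C_\nu(\tilde s).
$$
Both $w_1,w_2$ have non-positive real parts, so by Lemma~\ref{lemma: bounds of exp}(4),
$|e^{w_2}-e^{w_1}|\le |w_2-w_1|=\frac{|x|}{a\gamma}|C_\nu(\tilde s)-i\tilde s\gamma|$; the bound
$|C_\nu(\tilde s)-i\tilde s\gamma|\le \tfrac12\tilde s^{\,2}\zeta_2=\tfrac12 s^2a^2\zeta_2$ from Lemma~\ref{lemma: bounds on c nu} then yields
$|B_a(s)-A(s)|\le \tfrac{a s^2 m_1\zeta_2}{2\gamma}$. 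Feeding this into Lemma~\ref{lemma: bounds of exp}(1) gives the first displayed inequality.

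For the derivative bound, differentiate under the integral sign and use $C_\nu'(t)=\int iz e^{izt}L(\rd z)$ (so $C_\nu'(0)=i\gamma$, $|C_\nu'|\le\zeta_1$, and $|C_\nu'(\tilde s)-i\gamma|\le |\tilde s|\zeta_2$). I would write
$$
\hat\mu_a'(s)-\hat\mu'(s)=\hat\mu(s)\Bigl[\bigl(e^{B_a(s)-A(s)}-1\bigr)B_a'(s)+\bigl(B_a'(s)-A'(s)\bigr)\Bigr],
$$
and then estimate $|B_a'(s)|\le \zeta_1 m_1/\gamma$ using $|e^{w_2}|\le 1$ together with $|C_\nu'|\le\zeta_1$. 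For the key difference, inside the integral I would split
$$
e^{w_2}\tfrac{x}{\gamma}C_\nu'(\tilde s)-e^{w_1} ix=\tfrac{x}{\gamma}\bigl[(e^{w_2}-e^{w_1})C_\nu'(\tilde s)+e^{w_1}(C_\nu'(\tilde s)-i\gamma)\bigr],
$$
and apply $|e^{w_1}|=1$, $|e^{w_2}-e^{w_1}|\le \tfrac{|x|s^2 a\zeta_2}{2\gamma}$, and the bounds just listed. Integrating against $M$ gives $|B_a'(s)-A'(s)|\le a\zeta_2\bigl(\tfrac{\zeta_1 s^2 m_2}{2\gamma^2}+\tfrac{|s|m_1}{\gamma}\bigr)$. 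Combining the three pieces produces the stated bound.

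For Part 2, I would simply apply Part 1 on $[-T,T]$, using $|s|\le T$ in the polynomial pre-factors and $e^{0.5as^2 m_1\zeta_2/\gamma}\le e^{0.5aT^2 m_1\zeta_2/\gamma}$ in the exponential. For the $L^q$ integrals of $(\hat\mu_a-\hat\mu)/s$ and $(\hat\mu_a-\hat\mu)/s^2$, the $s^2$ factor from Part 1 cancels or reduces to $|s|\le T$; since $q\ge 1$ and $|\hat\mu(s)|\le 1$ we have $|\hat\mu(s)|^q\le |\hat\mu(s)|$, so $\int_{-T}^{T}|\hat\mu(s)|^q\rd s\le r_0$, producing the factor $r_0^{1/q}$. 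For the derivative integral, Part 1 gives three summands in $|(\hat\mu_a'-\hat\mu')/s|$, and I would apply Minkowski's inequality in $L^2([-T,T])$ to handle each separately, bounding the exponential by its value at $T$ in the third term.

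The main bookkeeping obstacle is the derivative decomposition: getting the three summands of the $\hat\mu'_a-\hat\mu'$ bound into the exact form $\tfrac{\zeta_1}{2\gamma^2}s^2 m_2+\tfrac{|s|}{\gamma}m_1+s^2 e^{0.5as^2 m_1\zeta_2/\gamma} m_1^2\tfrac{\zeta_1}{2\gamma^2}$, and then recombining after Minkowski's inequality so that the common factor $a\zeta_2/\gamma$ and the exponential appear in the right places. Everything else is a direct application of Lemmas~\ref{lemma: bounds of exp} and \ref{lemma: bounds on c nu}.
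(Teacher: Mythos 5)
Your proposal is correct and follows essentially the same route as the paper: the same factorization $\hat\mu_a(s)-\hat\mu(s)=\hat\mu(s)\bigl(e^{B_a(s)-A(s)}-1\bigr)$ handled with Lemma \ref{lemma: bounds of exp} and Lemma \ref{lemma: bounds on c nu}, the same product-rule decomposition of $\hat\mu_a'-\hat\mu'$ (the paper's $A_a(\hat\mu_a-\hat\mu)+(A_a-A)\hat\mu$ is exactly your $\hat\mu[(e^{B_a-A}-1)B_a'+(B_a'-A')]$), and the same pointwise bound on $[-T,T]$ combined with $|\hat\mu|^q\le|\hat\mu|$ to extract $r_0^{1/q}$ in Part 2. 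The only cosmetic difference is that for $B_a'-A'$ the paper pulls $C_\nu'$ back inside the $L$-integral and applies Lemma \ref{lemma: bounds of exp}(2) there, whereas you split off $C_\nu'(\tilde s)-i\gamma$ directly; both give the identical bound.
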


Our proof of the first part is based on Lemma \ref{lemma: bounds of exp}. A different approach for getting related bounds is used in the proof of Theorem 4.1 in \cite{Arras:Houdre:2019}. However, that approach seems to lead to sub-optimal bounds in this case.

\begin{proof}
By the first part of Lemma \ref{lemma: bounds of exp} and the fact that $|e^{ixs}|=1$
\begin{eqnarray*}
\left|\hat\mu_{a}(s) - \hat\mu(s)\right|&=& |\hat\mu(s)|\left| \exp\left\{\int_{-\infty}^\infty\left(e^{C_\nu(sax/|x|)|x|/(a\gamma)}-e^{isx}\right)M(\rd x) \right\}-1\right|\\
&\le&  |\hat\mu(s)| \exp\left\{\int_{-\infty}^\infty\left|e^{C_\nu(sax/|x|)|x|/(a\gamma)-ixs}-1\right| M(\rd x) \right\}\\
&&\quad\times \int_{-\infty}^\infty\left|e^{C_\nu(sax/|x|)|x|/(a\gamma)-ixs}-1\right|M(\rd x).
\end{eqnarray*}
Combining Lemmas \ref{lemma: bounds of exp} and \ref{lemma: bounds on c nu} with \eqref{eq: real part c is neg} gives
\begin{eqnarray}
&& \int_{-\infty}^\infty\left|e^{C_\nu(sax/|x|)|x|/(a\gamma) -ixs}-1\right| M(\rd x)\nonumber \\
&&\qquad \le \int_{-\infty}^\infty\left|C_\nu(sax/|x|)|x|/(a\gamma) -ixs\right| M(\rd x)\nonumber \\
&&\qquad = \frac{1}{a\gamma}\int_{-\infty}^\infty\left|C_\nu(sax/|x|) -isa\gamma x/|x|\right| |x| M(\rd x)\nonumber \\
&&\qquad \le \frac{a}{2\gamma}s^2\zeta_2\int_{-\infty}^\infty |x| M(\rd x) = \frac{a}{2\gamma} s^2 \zeta_2m_1.\label{eq: bound orig}
\end{eqnarray}
From here the first inequality follows. Turning to the second inequality, let
$$
A_a(s) = \frac{1}{\gamma}\int_{-\infty}^\infty e^{C_\nu(sax/|x|)|x|/(a\gamma) }C'_\nu(sax/|x|)xM(\rd x)
$$
and
$$
A(s) = i\int_{-\infty}^\infty e^{ixs}xM(\rd x).
$$
By dominated convergence, we have $\hat\mu_a'(s) = A_a(s)\hat\mu_a(s)$ and $\hat\mu'(s) = A(s)\hat\mu(s)$. It follows that
$$
\left|\hat\mu'_{a}(s) - \hat\mu'(s)\right| \le \left|A_{a}(s) - A(s)\right||\hat\mu(s)|+|A_a(s)|\left|\hat\mu_{a}(s) - \hat\mu(s)\right|.
$$
By dominated convergence,
$$
C'_\nu(s) = i\int_{-\infty}^\infty e^{izs} zL(\rd z),
$$
and thus, for any $s\in\mathbb R$
$$
|C'_\nu(s)| \le \int_{-\infty}^\infty |z|L(\rd z)=\zeta_1
$$
and
\begin{eqnarray*}
\left|A_a(s) \right| &\le& \frac{\zeta_1}{\gamma}\int_{-\infty}^\infty\left| e^{C_\nu(sax/|x|)|x|/(a\gamma) }\right||x|M(\rd x)  \le \frac{1}{\gamma} m_1 \zeta_1,
\end{eqnarray*}
where the last inequality follows from \eqref{eq: real part c is neg}. Next, using Lemmas \ref{lemma: bounds of exp} and \ref{lemma: bounds on c nu} and \eqref{eq: real part c is neg} gives
\begin{eqnarray*}
\left|A_a(s) -A(s) \right| &\le&  \frac{1}{\gamma}\int_{-\infty}^\infty\left| e^{C_\nu(sax/|x|)|x|/(a\gamma)}\frac{1}{i}C'_\nu(sax/|x|)-\gamma e^{ixs}\right||x|M(\rd x)  \\
 &\le&  \frac{1}{\gamma}\int_{-\infty}^\infty\int_{-\infty}^\infty\left| e^{C_\nu(sax/|x|)|x|/(a\gamma)}e^{izsax/|x|}- e^{ixs}\right||z|L(\rd z)|x|M(\rd x)  \\
 &=&  \frac{1}{\gamma}\int_{-\infty}^\infty\int_{-\infty}^\infty\left| e^{C_\nu(sax/|x|)|x|/(a\gamma)+izsax/|x|-ixs}-1\right||z|L(\rd z)|x|M(\rd x)  \\
 &\le& \frac{1}{\gamma}\ \int_{-\infty}^\infty\int_{-\infty}^\infty\left| C_\nu(sax/|x|)\frac{|x|}{a\gamma}+izsa\frac{x}{|x|}-ixs\right||z|L(\rd z) |x|M(\rd x)  \\
 &\le& \frac{1}{a\gamma^2}  \int_{-\infty}^\infty  \left| C_\nu(sax/|x|)-i\gamma as\frac{x}{|x|}\right| |x|^2M(\rd x) \int_{-\infty}^\infty |z|L(\rd z)\\
 &&\qquad +  \frac{a\left|s \right|}{\gamma}\ \int_{-\infty}^\infty |x|M(\rd x)\int_{-\infty}^\infty|z|^2L(\rd z)  \\
  &\le& \frac{a}{2\gamma^2} s^2 m_2 \zeta_1\zeta_2 +  \frac{a\left|s \right|}{\gamma} m_1\zeta_2.
\end{eqnarray*}
Putting everything together gives the first part. The second part follows immediately from the first, Minkowski's inequality, and the fact that $|\hat\mu(s)|\le1$ for each $s\in\mathbb R$.
\end{proof}

\begin{proof}[Proof of Theorem \ref{thrm: main gen}]
The proof follows by combining Lemmas \ref{lemma: Esseen} and \ref{lemma: char func bound}. We illustrate the approach for the first bound only, as the rest can be verified in a similar way.  By \eqref{eq: Esseen inf} for any $T>0$
\begin{eqnarray*}
\left\| F_{a} - F \right\|_\infty  &\le& \frac{1}{\pi}\int_{-T}^T \left|\frac{\hat\mu_{a}(s) - \hat\mu(s)}{s} \right|\rd s+ \frac{12 r_0}{\pi^2 T}\\
&\le& \frac{1}{\pi}a T e^{0.5aT^2 m_1\zeta_2/\gamma} m_1\frac{\zeta_2}{2\gamma}  r_0 + \frac{12 r_0}{\pi^2 T},
\end{eqnarray*}
where the second line follows by \eqref{eq: bound on integs inf} with $q=1$. Taking $T=a^{-1/2}$ gives the result.
\end{proof}

\begin{proof}[Proof of Theorem \ref{thrm: main TS}]
First, fix $\delta\in(0,1)$ and note that, since $\lim_{x\to0}g(x)=1$, there exists $\beta>0$ such that for any $x\in(-\beta,\beta)$ we have $g(x)\ge1-\delta$. It follows that for $|s|>1$
\begin{eqnarray*}
|\hat\mu(s)| &=& \exp\left\{- \int_{-\infty}^\infty\left(1-\cos(|sx|)\right)M(\rd x)\right\} \\
&=& \exp\left\{-\int_0^\infty\left(1-\cos(|s|x)\right)\left(\eta_+ g(x)+\eta_- g(-x)\right) x^{-1-\alpha}\rd x\right\} \\
&=& \exp\left\{-|s|^\alpha \int_0^\infty\left(1-\cos(x)\right)\left(\eta_+ g(x/|s|)+\eta_- g(-x/|s|)\right)x^{-1-\alpha}\rd x\right\} \\
&\le& \exp\left\{-|s|^\alpha \int_0^\beta \left(1-\cos(x)\right)\left(\eta_+ g(x/|s|)+\eta_- g(-x/|s|)\right)x^{-1-\alpha}\rd x\right\}\\
&\le& \exp\left\{-|s|^\alpha\left(\eta_++\eta_-\right) (1-\delta) \int_0^\beta \left(1-\cos(x)\right)x^{-1-\alpha}\rd x\right\} \\
&=& \exp\left\{-|s|^\alpha A\right\},
\end{eqnarray*}
where
$$
A = (1-\delta)\left(\eta_++\eta_-\right) \int_0^\beta \left(1-\cos(x)\right)x^{-1-\alpha}\rd x>0.
$$
Clearly $|\hat\mu(s)|$ is integrable and, hence, $r_0<\infty$ in this case.

Lemma \ref{lemma: char func bound} implies that for any $q\ge1$
\begin{eqnarray*}
\left(\int_{-T}^T\left|\frac{\hat\mu_a(s)-\hat\mu(s)}{s}\right|^q\rd s\right)^{1/q} &\le& am_1\frac{\zeta_2}{2\gamma}\left( \int_{-T}^T |s|^q |\hat\mu(s)|^q e^{0.5qas^2m_1\zeta_2/\gamma} \rd s\right)^{1/q}  \\
&\le& am_1 \frac{\zeta_2}{\gamma}\left( \left(  \int_{0}^1 s^q e^{qa m_1 \zeta_2/\gamma} \rd s \right)^{1/q} \right.\\
&&\ \left.+ \left( \int_{1}^T s^qe^{-qs^\alpha A} e^{0.5qAs^{\alpha}s^{2-\alpha}am_1\zeta_2/(\gamma A)} \rd s\right)^{1/q} \right) \\
&\le&  am_1 \frac{\zeta_2}{\gamma} \left( e^{a m_1 \zeta_2/\gamma} \right.\\
&&\ +  \left.\left(\int_{1}^\infty s^qe^{-qs^\alpha A } e^{0.5qAs^{\alpha}T^{2-\alpha}am_1\zeta_2/(\gamma A)} \rd s \right)^{1/q}\right),
\end{eqnarray*}
where the second line follows by Minkowski's inequality and symmetry. Taking $q=1$ and combining this with Lemma \ref{lemma: Esseen} gives
\begin{eqnarray*}
\|F-F_a\|_\infty&\le& \inf_{T>0}\left( \frac{m_1 }{\pi}a e^{a m_1 \zeta_2/\gamma} \frac{\zeta_2}{\gamma} \right.\\
&&\quad\left.+  \frac{m_1}{\pi}a \frac{\zeta_2}{\gamma} \int_{1}^\infty s e^{-s^\alpha A } e^{0.5As^{\alpha}T^{2-\alpha}am_1\zeta_2/(\gamma A)} \rd s  +\frac{12 r_0}{\pi^2 T}\right)\\
&\le& a\frac{m_1}{\pi} e^{a m_1 \zeta_2/\gamma}\frac{\zeta_2}{\gamma}  + a\frac{m_1}{\pi}  \frac{\zeta_2}{\gamma} \int_{1}^\infty s  e^{-s^{\alpha}A/2} \rd s\\
&&\quad  + a^{1/(2-\alpha)} \left(\frac{m_1\zeta_2}{A\gamma}\right)^{1/(2-\alpha)} \frac{12 r_0}{\pi^2} = \mathcal O\left( a^{1/(2-\alpha)}\right),
\end{eqnarray*}
where we take $T = a^{-1/(2-\alpha)} \left(\frac{A\gamma}{m_1\zeta_2}\right)^{1/(2-\alpha)}$ in the final inequality.  The other parts of the theorem can be shown in a similar way.
\end{proof}

\begin{proof}[Proof of Proposition \ref{prop: facts mixed Pois}]
The first part follows from the fact that
\begin{eqnarray*}
\sum_{k=1}^{\infty}  \frac{\ell_k^{(a)}}{k!}\left(e^{iza k} -1\right) &=&  \int_{0}^{\infty} e^{-x/a} \sum_{k=1}^{\infty} \frac{1}{k!}\left(e^{iza k} -1\right) (x/a)^{k} M(\rd x)\\
&=&  \int_{0}^{\infty} e^{-x/a} \sum_{k=1}^{\infty} \frac{1}{k!}\left((e^{iza}x/a)^k -(x/a)^{k} \right) M(\rd x)\\
&=&  \int_{0}^{\infty} e^{-x/a} \left(e^{e^{iza}x/a} -e^{x/a} \right) M(\rd x) \\
&=&  \int_{0}^{\infty} \left(e^{(e^{iza}-1)x/a} - 1\right) M(\rd x).
\end{eqnarray*}
The second part follows immediately from the first. We now turn to the third part. We have
\begin{eqnarray*}
p_k^{(a)} &=& P(Y_a =  ak) = P(Z(X/a) =  k) = \frac{1}{k!}\rE\left[e^{-X/a}(X/a)^k\right] \\
&=& \frac{a^{-k}}{k!}\rE\left[e^{-X/a}X^k\right] = \frac{a^{-k}}{k!} (-1)^k \psi^{(k)}(1/a).
\end{eqnarray*}
where $\psi(s) = \rE[e^{-sX}]$ for $s\ge0$ is the Laplace transform of $X$ and $\psi^{(k)}$ is the $k$th derivative of $\psi$. Formal substitution shows that
$$
\psi(s) =  \hat\mu(is)= \exp \left\{ \int_{0}^\infty \left(e^{-xs}-1\right) M(\rd x) \right\}.
$$
From here we see that $p_0^{(a)}=\psi(1/a)=e^{-\ell_+^{(a)}}$. Next, setting $f(s) = \log \psi(s)$ we note that the $k$th derivative of $f$ is given by
$$
f^{(k)}(s)= (-1)^k\int_{0}^\infty e^{-xs} x^k M(\rd x), \ \ k=1,2,\dots.
$$
It follows that $f^{(k)}(1/a) =(-a)^k \ell_k^{(a)}$. From standard results about derivatives of exponential functions, we get
$$
\psi^{(k)}(s) = \sum_{j=1}^{k-1} \binom{k-1}{j} \psi^{(j)}(s)f^{(k-j)}(s) , \ \ s>0 .
$$
see, e.g., Lemma 1 in \cite{Grabchak-DTS}. It follows that
\begin{eqnarray*}
p_k^{(a)} &=& \frac{a^{-k}}{k!} (-1)^k \psi^{(k)}(1/a) = \frac{1}{k!} (-a)^{-k}  \sum_{j=1}^{k-1} \binom{k-1}{j} \psi^{(j)}(1/a)f^{(k-j)}(1/a)\\
&=& \frac{1}{k!} (-a)^{-k}  \sum_{j=1}^{k-1} \binom{k-1}{j} p_j^{(a)}(-a)^j j! (-a)^{k-j} \ell_{k-j}^{(a)}\\
&=& \frac{1}{k}  \sum_{j=1}^{k-1} \frac{1}{(k- j-1)!} p_j^{(a)} \ell_{k-j}^{(a)},
\end{eqnarray*}
as required. The fourth part follows by a standard conditioning argument. 
\end{proof}

\section{Extensions}\label{sec: extensions}

In this section we consider some extensions of our work.  These results are not complete, but aim to suggest directions for future research.

\subsection{Infinite $r_0$}

The results in Theorem \ref{thrm: main gen} assume that $r_0=\int_{-\infty}^\infty |\hat\mu(s)|\rd s<\infty$, which implies that $\mu$ has a bounded density. Here, we briefly discuss what happens if we allow $r_0=\infty$. In this case, we no longer need $|\hat\mu(s)|$ in the bounds in Lemma \ref{lemma: char func bound}. While we can apply $|\hat\mu(s)|\le1$, we can get better bounds as follows. By \eqref{eq: real part c is neg}, the fourth part of Lemma \ref{lemma: bounds of exp}, and \eqref{eq: bound orig}, we have for $s\in\mathbb R$
\begin{eqnarray*}
\left|\hat\mu_{a}(s) - \hat\mu(s)\right|&=&\left| e^{\int_{-\infty}^\infty\left(e^{C_\nu(sax/|x|)|x|/(a\gamma)}-1\right)M(\rd x)}-e^{\int_{-\infty}^\infty\left(e^{isx}-1\right)M(\rd x)}\right|\\
&\le&\int_{-\infty}^\infty\left|e^{C_\nu(sax/|x|)|x|/(a\gamma)}- e^{isx}\right| M(\rd x)\le \frac{a}{2\gamma} s^2 \zeta_2 m_1.
\end{eqnarray*}
From here, by arguments as in the proof of Lemma \ref{lemma: char func bound}, we get for $s\in\mathbb R$
\begin{eqnarray*}
\left|\hat\mu'_{a}(s) - \hat\mu'(s)\right|\le a \zeta_2  \left(\frac{1}{2\gamma^2} s^2 \zeta_1m_2 +  \frac{\left|s \right|}{\gamma} m_1 + s^2  m_1^2\zeta_1\frac{1}{2\gamma^2}\right).
\end{eqnarray*}
It follows that for any $a>0$, $T>0$, and $q\ge1$, we have
\begin{eqnarray}\label{eq: bound on integs inf 2}
\left(\int_{-T}^T\left|\frac{\hat\mu_{a}(s) - \hat\mu(s)}{s}\right|^q\rd s\right)^{1/q} \le aT^{1+1/q} \frac{2^{1/q-1}}{\gamma(q+1)^{1/q} } \zeta_2 m_1,
\end{eqnarray}
\begin{eqnarray*}\label{eq: bound on integs q 2}
\left(\int_{-T}^T\left|\frac{\hat\mu_{a}(s) - \hat\mu(s)}{s^2}\right|^q\rd s\right)^{1/q} \le aT^{1/q} 2^{1/q-1}\zeta_2 m_1,
\end{eqnarray*}
and 
\begin{eqnarray*}\label{eq: bound on integs one 2}
\left(\int_{-T}^T\left|\frac{\hat\mu'_{a}(s) - \hat\mu'(s)}{s}\right|^2\rd s\right)^{1/2} \le a \sqrt 2 \zeta_2  \left(\frac{T^{3/2}}{2\sqrt 3 \gamma^2} \zeta_1m_2 +  \frac{T^{1/2}}{\gamma} m_1 + m_1^2\zeta_1\frac{T^{3/2}}{2\sqrt 3\gamma^2}\right).
\end{eqnarray*}
Now, arguments as in the proof of Theorem \ref{thrm: main gen} give the following.

\begin{prop}
Assume that $m_1,\zeta_1,\zeta_2$ are all finite.  For any $p\in[2,\infty)$ and any $a>0$
$$
\left\| F_{a} - F \right\|_p \le \mathcal O\left(a^{1/(2p)}\right).
$$
If, in addition, $m_2<\infty$, then for any  $p\in[1,\infty)$ and any $a>0$
\begin{eqnarray*}
\left\| F_{a} - F \right\|_p &\le& \mathcal O\left(a^{2/(5p)}\right).
\end{eqnarray*}
\end{prop}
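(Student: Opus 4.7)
The plan is to mirror the proof of Theorem \ref{thrm: main gen}, feeding the revised characteristic function bounds (the ones without the factor $|\hat\mu(s)|$, displayed just before the proposition) into the appropriate version of Esseen's smoothing lemma from Lemma \ref{lemma: Esseen}, and then optimizing in the truncation parameter $T$. The loss of the rapidly decaying factor $|\hat\mu(s)|$ in the integrands is what prevents us from recovering the $a^{1/(2p)}$ rate in the second statement; it forces $T$ to be chosen smaller, which produces the worse exponent $a^{2/(5p)}$.

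For the first statement, fix $p\in[2,\infty)$ and set $q=p/(p-1)$, so that $1+1/q = 2-1/p$. Applying \eqref{eq: Esseen p} together with \eqref{eq: bound on integs inf 2} yields, for every $T>0$,
$$
\|F_a - F\|_p \le C_1\, a\, T^{2-1/p} + \frac{4(p-1)}{T^{1/p}},
$$
where $C_1$ depends only on $p$, $\gamma$, $\zeta_2$ and $m_1$, and is finite under the hypotheses. Balancing by taking $T = a^{-1/2}$ makes the first term equal to a constant times $a^{1-(2-1/p)/2}=a^{1/(2p)}$ and the second term equal to a constant times $a^{1/(2p)}$, which gives the claimed $\mathcal O(a^{1/(2p)})$ bound.

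For the second statement, assume in addition that $m_2<\infty$ and apply \eqref{eq: Esseen one} together with the three $q=2$ bounds displayed immediately before the proposition. Each of the three integral contributions is dominated by a constant times $aT^{3/2}$ (the $s^{-2}$ integral is actually of order $aT^{1/2}$ and is absorbed), so
$$
\|F_a - F\|_1 \le C_2\, a\, T^{3/2} + \frac{4\pi}{T},
$$
for some constant $C_2$ depending only on $\gamma$, $\zeta_1$, $\zeta_2$, $m_1$ and $m_2$. Setting $T = a^{-2/5}$ equates the two terms at order $a^{2/5}$, which yields $\|F_a - F\|_1 = \mathcal O(a^{2/5})$. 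The first inequality in \eqref{eq: Esseen one} then gives $\|F_a - F\|_p \le \|F_a - F\|_1^{1/p} = \mathcal O(a^{2/(5p)})$.

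There is no conceptual obstacle here, since all the analytic work has already been done in the revised bounds preceding the proposition. The only potential pitfalls are purely algebraic: keeping track of the powers of $T$ produced by the three separate integrals in the $L^1$ case, verifying that the $s^{-2}$ contribution truly is subdominant, and confirming that the optimum $T=a^{-2/5}$ (respectively $T=a^{-1/2}$) equates the leading terms. Each of these is a one-line computation from the displayed bounds.
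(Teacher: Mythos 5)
Your proposal is correct and follows exactly the route the paper intends (the paper only sketches it as ``arguments as in the proof of Theorem \ref{thrm: main gen}''): feed the $r_0$-free bounds \eqref{eq: bound on integs inf 2} and its companions into \eqref{eq: Esseen p} with $q=p/(p-1)$, respectively \eqref{eq: Esseen one} with $q=2$, and optimize at $T=a^{-1/2}$, respectively $T=a^{-2/5}$. The exponent bookkeeping ($aT^{2-1/p}$ versus $T^{-1/p}$, and $aT^{3/2}$ versus $T^{-1}$ with the $aT^{1/2}$ terms absorbed for $T\ge1$) checks out, so no gaps.
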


Here, for $p\in[1,2)$, we get a slightly worse bound than in Theorem \ref{thrm: main gen}. We do not know if this due to an actually slower rate of convergence or if it is an artifact of the proof. We cannot give a bound for the $L^\infty$ metric without assuming $r_0<\infty$, as \eqref{eq: Esseen inf} requires this assumption.  However, we can get a bound for the closely related L\'evy metric, which metrizes weak convergence. Specifically, combining \eqref{eq: bound on integs inf 2} with Theorem 3.2 in \cite{Bobkov:2016} leads to the rate $\mathcal O\left(a^{1/3}\log(1/a)\right)$.

\subsection{Normal Variance Mixtures}

In this subsection we give rates of convergence for approximating normal variance mixtures. A normal variance mixture is the distribution of a random variable of the form $\sqrt X W$, where $W\sim N(0,1)$ and $X$ is a positive random variable independent of $W$. Such distributions often arise in financial application, see, e.g.,\ \cite{Sabino:2023}, \cite{Bianchi:Tassinari:2024}, and the references therein.

Throughout this section, let $W\sim N(0,1)$  and let $Z=\{Z(t):t\ge0\}$ be a L\'evy process with $Z(1)\sim \nu=\ID_0(L)$. Assume that  $L$ is symmetric, i.e.,\ that $L(B)=L(-B)$ for each $B\in\mathfrak B(\mathbb R)$. This implies that $\nu$ is also symmetric and that its cumulant generating function $C_\nu$ is real. Assume further that
 \begin{eqnarray}\label{eq: moments of L}
 \int_{-\infty}^\infty\left(|z|\vee z^2\right) L(\rd z)<\infty
 \end{eqnarray}
 and set
 $$
 \gamma^* =  \int_{-\infty}^\infty z^2 L(\rd z).
 $$
Let $X\sim \mu=\ID_0(M)$ with $M((-\infty,0])=0$ and 
$$
m_1 := \int_0^\infty x M(\rd x)<\infty.
$$
Assume that $X$ is independent of $W$ and $Z$ when appropriate. Let $\mu^*$ be the distribution of $\sqrt X W$ and let $\mu_a^*$ be the distribution of $\sqrt{\frac{a}{\gamma^*}} Z(X/a)$. By a conditioning argument, it is easily checked that for any $a>0$ the characteristic function of $\mu^*_{a}$ is
$$
\hat \mu^*_a(s) = \exp\left\{\int_0^\infty \left(e^{C_\nu(s\sqrt {a/\gamma^*})x/a}-1\right)M(\rd x)\right\}  \quad s\in\mathbb R
$$
and the characteristic function of $\mu^*$ is
$$
\hat\mu^*(s) = \exp\left\{\int_0^\infty \left(e^{-s^2x/2}-1\right)M(\rd x)\right\}, \quad s\in\mathbb R.
$$

\begin{prop}
We have
$$
\sqrt{\frac{a}{\gamma^*}} Z(X/a)  \cond \sqrt X W \mbox { as } a\downarrow0.
$$
\end{prop}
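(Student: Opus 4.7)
The plan is to mimic the proof of Proposition \ref{prop: conv}: establish convergence of the characteristic functions of $\mu_a^*$ to that of $\mu^*$ and invoke Lévy's continuity theorem. All the real work is in analyzing the integrand $e^{(x/a)C_\nu(s\sqrt{a/\gamma^*})}-1$ as $a\downarrow 0$ and producing a dominating function that is integrable with respect to $M$.

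First I would exploit symmetry. Since $L$ is symmetric, $C_\nu(t)=\int(\cos(tz)-1)L(\rd z)$ is real and nonpositive, and the first moment $\int z L(\rd z)=0$. Hence Lemma \ref{lemma: bounds on c nu} (with $\gamma=0$ and $\zeta_2=\gamma^*$) gives the clean bound $|C_\nu(t)|\le \tfrac{1}{2}t^2\gamma^*$ for all $t\in\mathbb R$. Applied at $t=s\sqrt{a/\gamma^*}$ this yields
$$
\frac{x}{a}\bigl|C_\nu(s\sqrt{a/\gamma^*})\bigr|\le \frac{x s^2}{2}.
$$

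Next I would compute the pointwise limit. Writing $t=s\sqrt{a/\gamma^*}$, so $a=t^2\gamma^*/s^2$, we have
$$
\frac{x}{a}C_\nu(s\sqrt{a/\gamma^*})=\frac{xs^2}{\gamma^*}\cdot\frac{C_\nu(t)}{t^2}.
$$
A Taylor-type estimate from the definition of $C_\nu$ (equivalently, the classical expansion of the characteristic function of the symmetric $\nu$) gives $C_\nu(t)/t^2\to -\gamma^*/2$ as $t\to 0$; indeed, by dominated convergence together with $|\cos(tz)-1|\le t^2z^2/2$ we obtain $\lim_{t\to 0}C_\nu(t)/t^2=-\tfrac{1}{2}\int z^2 L(\rd z)=-\gamma^*/2$. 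Therefore $\tfrac{x}{a}C_\nu(s\sqrt{a/\gamma^*})\to -xs^2/2$ for every $x>0$, and so the integrand $e^{(x/a)C_\nu(s\sqrt{a/\gamma^*})}-1$ converges pointwise to $e^{-xs^2/2}-1$.

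Now I would deploy dominated convergence on $M$. By part 2 of Lemma \ref{lemma: bounds of exp} (using nonpositivity of $C_\nu$, hence of $(x/a)C_\nu(s\sqrt{a/\gamma^*})$), together with the bound just obtained,
$$
\Bigl|e^{(x/a)C_\nu(s\sqrt{a/\gamma^*})}-1\Bigr|\le 2\wedge \frac{xs^2}{2}.
$$
Since $m_1<\infty$ and $M$ is a Lévy measure, $\int_0^\infty\bigl(2\wedge \tfrac{xs^2}{2}\bigr)M(\rd x)<\infty$, so dominated convergence applies and
$$
\int_0^\infty\Bigl(e^{(x/a)C_\nu(s\sqrt{a/\gamma^*})}-1\Bigr)M(\rd x)\longrightarrow \int_0^\infty\bigl(e^{-xs^2/2}-1\bigr)M(\rd x),
$$
which gives $\hat\mu_a^*(s)\to\hat\mu^*(s)$ for every $s\in\mathbb R$.

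Finally, since $\hat\mu^*$ is the characteristic function of a genuine probability distribution (and in particular continuous at $0$), Lévy's continuity theorem yields $\mu_a^*\cond\mu^*$, i.e.\ $\sqrt{a/\gamma^*}\,Z(X/a)\cond \sqrt{X}\,W$. The only mildly delicate step is the dominated convergence argument on $M$; the sharp bound $|C_\nu(t)|\le \tfrac{1}{2}t^2\gamma^*$ (available only because symmetry kills the linear term) is exactly what is needed to control the integrand by $2\wedge xs^2/2$, which is $M$-integrable thanks to the assumption $m_1<\infty$.
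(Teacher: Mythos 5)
Your proof is correct and follows essentially the same route as the paper: symmetry of $L$ reduces $C_\nu$ to a real, nonpositive function with a second-order Taylor bound, dominated convergence (first in $L$, then in $M$ with an integrable dominating function) gives pointwise convergence of $\hat\mu^*_a$ to $\hat\mu^*$, and L\'evy's continuity theorem concludes. The only cosmetic differences are that the paper establishes the pointwise limit via a third-order remainder estimate rather than the ratio $C_\nu(t)/t^2\to-\gamma^*/2$, and dominates the $M$-integrand by $\tfrac{s^2}{2}x$ (using $m_1<\infty$) rather than your $2\wedge\tfrac{xs^2}{2}$; also note that Lemma \ref{lemma: bounds on c nu} is stated under the standing assumption $\gamma>0$, so strictly speaking you are reusing its (unchanged) proof rather than the lemma itself, which is why the paper rederives the bound directly.
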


\begin{proof}
First note that the fact that $L$ is symmetric and satisfies \eqref{eq: moments of L} implies that
\begin{eqnarray}
C_\nu(s\sqrt {a/\gamma^*}) &=& \int_{-\infty}^{\infty}\left(e^{izs\sqrt {a/\gamma^*}}-1\right) L(\rd z) \nonumber \\
&=& \int_{-\infty}^{\infty}\left(e^{izs\sqrt {a/\gamma^*} }-1-izs \sqrt \frac{ a}{\gamma^*} \right) L(\rd z).\label{eq: C nu sym}
\end{eqnarray}
Now, by Lemma \ref{lemma: bounds of exp}
\begin{eqnarray}\label{eq: bound in Gaus case}
\left|\frac{1}{a} \left(e^{izs\sqrt {a/\gamma^*}}-1-izs \sqrt\frac{a}{\gamma^*} \right) \right|\le \frac{z^2s^2 }{2 \gamma^*} 
\end{eqnarray}
and
$$
\frac{1}{a} \left|e^{izs\sqrt {a/\gamma^*}}-1-izs \sqrt \frac{ a}{\gamma^*} + \frac{a}{2 \gamma^*}z^2s^2 \right| \le \frac{\sqrt a}{6}  \left|zs\right|^3 (\gamma^*)^{-3/2}
\to0 \mbox{ as } a\downarrow0.
$$
It follows that 
$$
\frac{1}{a} \left(e^{izs\sqrt {a/\gamma^*}}-1-izs \sqrt \frac{ a}{\gamma^*}  \right) =  -\frac{z^2s^2 }{2\gamma^*} +\frac{1}{a} \left(e^{izs\sqrt {a/\gamma^*} }-1-izs \sqrt \frac{a}{\gamma^*}  + \frac{az^2s^2}{2\gamma^*} \right)
$$
and by dominated convergence
\begin{eqnarray*}
\lim_{a\downarrow0} \frac{1}{a} C_\nu(s\sqrt {a/\gamma^*}) &=& \lim_{a\downarrow0}  \frac{1}{a} \int_{-\infty}^{\infty}\left(e^{izs\sqrt{ a/\gamma^*}}-1-izs  \sqrt \frac{ a}{\gamma^*} \right) L(\rd z)\\
&=& -\frac{1}{2 \gamma^*}s^2 \int_{-\infty}^{\infty} z^2 L(\rd z) = -\frac{1}{2}s^2.
\end{eqnarray*}
From here we get $\lim_{a\downarrow0}\hat \mu^*_a(s) = \hat \mu^*(s)$ for each $s\in\mathbb R$ using another application of dominated convergence. To see that it holds note that by \eqref{eq: real part c is neg}, Lemma \ref{lemma: bounds of exp}, and \eqref{eq: bound in Gaus case}, we have
\begin{eqnarray*}
\left|e^{C_\nu(s\sqrt {a/\gamma^*})x/a}-1\right| \le \left|C_\nu(s\sqrt {a/\gamma^*})\frac{x}{a}\right| \le \frac{s^2 }{2 \gamma^*}\int_{-\infty}^\infty z^2 L(\rd z) |x|,
\end{eqnarray*}
which is integrable with respect to $M$.
\end{proof}

Henceforth, assume that 
$$
\zeta_3 :=  \int_{-\infty}^\infty |z|^3 L(\rd z)<\infty \mbox{ and } r_0 := \int_{-\infty}^\infty \left|\hat\mu^*(s)\right|\rd s  <\infty.
$$

\begin{thrm}\label{thrm: main var mix}
Let $F_a$ be the cdf of $\mu_a^*$ and let $F$ be the cdf of $\mu^*$. Assume that $r_0,m_1,\zeta_3$ are all finite.  For any $a>0$, 
$$
\left\| F_{a} - F \right\|_\infty \le  a^{1/6}\left(  \frac{1}{6\pi} \left( \frac{1}{\sqrt {\gamma^*}}\right)^3 e^{\left(\gamma^*\right)^{-3/2} \zeta_3m_1/6} \zeta_3m_1 + \frac{12}{\pi^2} \right)  r_0= \mathcal O\left(a^{1/6}\right)
$$
and for any $p\in[2,\infty)$ and any $a>0$
\begin{eqnarray*}
\left\| F_{a} - F \right\|_p &\le& \left(  \frac{ a^{1/6}}{12} \left(\gamma^*\right)^{-3/2}  e^{\left(\gamma^*\right)^{-3/2} \zeta_3m_1/6} \zeta_3m_1 r_0^{1-1/p} + 4(p-1) a^{1/(6p)} \right) \\
&=& \mathcal O\left(a^{1/(6p)}\right).
\end{eqnarray*}
\end{thrm}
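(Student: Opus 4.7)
The plan is to mimic the structure of the proof of Theorem \ref{thrm: main gen}: establish a sharp pointwise bound on $|\hat\mu^*_a(s)-\hat\mu^*(s)|$, substitute it into Esseen's smoothing lemma (Lemma \ref{lemma: Esseen}), and optimize the truncation parameter $T$. The one genuinely new ingredient is an improved third-order cumulant expansion, made available by the symmetry of $L$, which is what ultimately produces the $a^{1/6}$ rate.

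First I would sharpen the cumulant estimate. Starting from the symmetric representation \eqref{eq: C nu sym} and applying the third part of Lemma \ref{lemma: bounds of exp} with $n=2$, together with $\int z^2 L(\rd z)=\gamma^*$, one obtains
\begin{equation*}
\left|C_\nu\!\left(s\sqrt{a/\gamma^*}\right)+\frac{as^2}{2}\right|\le \frac{a^{3/2}|s|^3\zeta_3}{6(\gamma^*)^{3/2}}.
\end{equation*}
Since $C_\nu$ is real under symmetry and $\Re C_\nu\le 0$ by \eqref{eq: real part c is neg}, both $C_\nu(\cdot)x/a$ and $-s^2 x/2$ have non-positive real parts for $x\ge 0$, so the fourth part of Lemma \ref{lemma: bounds of exp} upgrades this to
\begin{equation*}
\left|e^{C_\nu(s\sqrt{a/\gamma^*})x/a}-e^{-s^2x/2}\right|\le \frac{x\sqrt a\,|s|^3\zeta_3}{6(\gamma^*)^{3/2}}.
\end{equation*}
Writing $\hat\mu^*_a(s)-\hat\mu^*(s)=\hat\mu^*(s)(e^{\Delta(s)}-1)$ with $\Delta(s)=\int_0^\infty(e^{C_\nu(s\sqrt{a/\gamma^*})x/a}-e^{-s^2x/2})M(\rd x)$, integrating the previous display against $M$, and applying the first part of Lemma \ref{lemma: bounds of exp} to $|e^{\Delta(s)}-1|$, yields
\begin{equation*}
\left|\hat\mu^*_a(s)-\hat\mu^*(s)\right|\le |\hat\mu^*(s)|\cdot\frac{\sqrt a\,|s|^3\zeta_3 m_1}{6(\gamma^*)^{3/2}}\exp\!\left\{\frac{\sqrt a\,|s|^3\zeta_3 m_1}{6(\gamma^*)^{3/2}}\right\}.
\end{equation*}

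Next I would plug this into Esseen's lemma. For the $L^\infty$ bound, \eqref{eq: Esseen inf}, the crude estimate $|s|\le T$ to pull the exponential out of the integral, and $\int|\hat\mu^*(s)|\,\rd s=r_0$ give
\begin{equation*}
\|F_a-F\|_\infty\le \frac{1}{\pi}\cdot\frac{\sqrt a\,\zeta_3 m_1}{6(\gamma^*)^{3/2}}\,T^2 r_0\,e^{\sqrt a T^3\zeta_3 m_1/(6(\gamma^*)^{3/2})}+\frac{12 r_0}{\pi^2 T}.
\end{equation*}
The choice $T=a^{-1/6}$ balances $T^2\sqrt a=T^{-1}=a^{1/6}$, makes $\sqrt a T^3=1$, and reproduces the stated constants exactly. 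For $p\in[2,\infty)$ I would use \eqref{eq: Esseen p} with $q=p/(p-1)$; since $|\hat\mu^*(s)|\le 1$ and $q\ge 1$ we have $|\hat\mu^*(s)|^q\le|\hat\mu^*(s)|$, so the $L^q$ norm of $s\mapsto|\hat\mu^*(s)|s^2$ over $[-T,T]$ is bounded by $T^2 r_0^{1/q}=T^2 r_0^{1-1/p}$. Taking $T=a^{-1/6}$ once more gives a first term of order $a^{1/6}$ and a residual $4(p-1)/T^{1/p}=4(p-1)a^{1/(6p)}$, with the latter dominating for $p\ge 2$.

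The main obstacle is the pointwise characteristic-function estimate itself. Reusing the second-order bound from Lemma \ref{lemma: bounds on c nu}, which is centered at $is\gamma$ rather than at $-as^2/2$, is blind to the correct Gaussian asymptotic when $\gamma=0$ and yields a strictly worse rate. One must instead exploit the symmetry of $L$ explicitly via the second-centered representation \eqref{eq: C nu sym} and extract a genuine third-order error of size $a^{3/2}|s|^3$; once that is in hand, the rest is bookkeeping and the choice $T=a^{-1/6}$.
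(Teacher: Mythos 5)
Your proposal is correct and follows essentially the same route as the paper: a third-order estimate exploiting the symmetric representation \eqref{eq: C nu sym} together with parts 1, 3, and 4 of Lemma \ref{lemma: bounds of exp} to get the pointwise bound $|\hat\mu^*_a(s)-\hat\mu^*(s)|\le |\hat\mu^*(s)|\frac{\sqrt a |s|^3\zeta_3 m_1}{6(\gamma^*)^{3/2}}e^{\sqrt a |s|^3\zeta_3 m_1/(6(\gamma^*)^{3/2})}$, then Esseen's lemma with $T=a^{-1/6}$. The only cosmetic difference is that the paper applies the fourth part of Lemma \ref{lemma: bounds of exp} to the exponentials first and then expands via the $L$-integral, whereas you bound $|C_\nu(s\sqrt{a/\gamma^*})+as^2/2|$ first; the resulting estimates and constants are identical.
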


For technical reasons, we are not able to get a rate of convergence for $p\in[1,2)$. 

\begin{proof}
By arguments similar to those in the proof of Lemma \ref{lemma: char func bound}, we have
\begin{eqnarray*}
\left|\hat\mu^*_{a}(s) - \hat\mu^*(s)\right| &\le&  |\hat\mu^*(s)| \exp\left\{\int_{0}^\infty\left|e^{C_\nu(s\sqrt {a/\gamma^*})x/a}-e^{-s^2x/2}\right| M(\rd x) \right\}\\
&&\quad\times \int_{0}^\infty\left| e^{C_\nu(s\sqrt {a/\gamma^*})x/a}-e^{-s^2x/2}\right|M(\rd x).
\end{eqnarray*}
By the fourth part of Lemma \ref{lemma: bounds of exp} and \eqref{eq: C nu sym}, we have
\begin{eqnarray*}
&& \int_{0}^\infty\left| e^{C_\nu(s\sqrt {a/\gamma^*})x/a}-e^{-s^2x/2}\right|M(\rd x)\nonumber \\
&&\qquad \le \int_{0}^\infty\left| C_\nu(s\sqrt {a/\gamma^*}) \frac{x}{a}+ \frac{s^2x \gamma^*}{2\gamma^*} \right|M(\rd x) \nonumber \\
&&\qquad \le  \int_{0}^\infty \frac{x}{a} \int_{-\infty}^\infty \left|e^{izs\sqrt {a/\gamma^*} }-1-izs \sqrt \frac{ a}{\gamma^*} + \frac{s^2z^2a}{2\gamma^*} \right| L(\rd z) M(\rd x) \nonumber \\
&&\qquad \le \frac{\sqrt a}{6}\left|\frac{s}{\sqrt {\gamma^*}}\right|^3\int_{-\infty}^\infty |z|^3 L(\rd z)\int_{0}^\infty xM(\rd x) = \frac{\sqrt a}{6} \left|\frac{s}{\sqrt {\gamma^*}}\right|^3 \zeta_3m_1.\label{eq: bound for norm}
\end{eqnarray*}
It follows that, for any $a>0$, $T>0$, and $q\ge1$ we have
\begin{eqnarray*}\label{eq: bound on integs inf normal}
\left(\int_{-T}^T\left|\frac{\hat\mu^*_{a}(s) - \hat\mu^*(s)}{s}\right|^q\rd s\right)^{1/q} \le \frac{\sqrt a}{6} T^2 \left( \frac{1}{\sqrt {\gamma^*}}\right)^3  e^{\frac{\sqrt a}{6} \left( \frac{T}{\sqrt {\gamma^*}}\right)^3 \zeta_3m_1} \zeta_3m_1 r_0^{1/q}.
\end{eqnarray*}
From here we proceed as in the proof of Theorem \ref{thrm: main gen}.
\end{proof}

\end{document}